\documentclass[12pt,reqno]{amsart}
\usepackage{soul}
\usepackage{multicol,amsmath,graphics, mathtools, cancel,soul,color,bbm,amsfonts,dsfont,tikz,mathrsfs,amssymb,bm,hyperref,comment}

\usepackage[left=1in, right=1in, top=1.1in,bottom=1.1in]{geometry}
\usetikzlibrary{matrix,patterns,positioning}
\numberwithin{equation}{section}

\hypersetup{
    colorlinks=true, 
    linktoc=all,     
    linkcolor=blue,  
}

\newcommand{\E}{\mathbb{E}}

\newcommand{\N}{\mathbb{N}}
\newcommand{\Pb}{\mathbb{P}}

\newcommand{\R}{\mathbb{R}}

\newcommand{\vertiii}[1]{{\left\vert\kern-0.25ex\left\vert\kern-0.25ex\left\vert #1
    \right\vert\kern-0.25ex\right\vert\kern-0.25ex\right\vert}}

\def\R{\mathbb{R}}

\def\PP{\mathbb{P}}

\def\dh2l{\mathbf{d}_{\mathbb{H}_{2\ell}}}
\def\d2{\mathbf{d}_2}

\newtheorem{thm}{Theorem}[section]
\newtheorem{lemma}[thm]{Lemma}
\newtheorem{cor}[thm]{Corollary}
\newtheorem{prop}[thm]{Proposition}

\newtheorem{theorem}{Theorem}[section]

\theoremstyle{remark}
\newtheorem{remark}[theorem]{Remark}
\theoremstyle{definition}

\newcounter{dummy} \numberwithin{dummy}{section}

\newtheorem{Corollary}[dummy]{Corollary}
\newtheorem{Lemma}[dummy]{Lemma}

\def\1{\mathbbm{1}}

\makeatletter
\@namedef{subjclassname@1991}{2020 Mathematics Subject Classification}
\makeatother

\begin{document}

\title{Attenuated Poisson Dirichlet approximations for divisibility configurations}

\author{Victor Bernal, David Flores, Arturo Jaramillo}
\address{Arturo Jaramillo: Department of Probability and statistics, Centro de Investigaci\'on en matem\'aticas (CIMAT)}
\email{jagil@cimat.mx}

\address{David Torres Flores: Escuela Nacional de Estudios Superiores, Unidad Le\'on, Guanajuato}
\email{dtorres@cimat.mx}

\address{Victor Bernal: Department of mathematics, UC Santa Barbara}
\email{victorbernal749@gmail.com}

\keywords{number theory, limit theorems, Dickman distribution}
\date{\today}

\subjclass[2020]{11N25, 60G55, 60F05}

\begin{abstract}
We study the point process formed by the normalized logarithms of the distinct prime factors of a harmonic random sample. We prove a quantitative convergence result, in a Wasserstein-type metric over decreasing sequences, toward the atom sequence of a Dickman Poisson cloud conditioned to have total mass at most one, equivalently a uniformly attenuated Poisson-Dirichlet law. The proof is based on the conditioned geometric representation of harmonic samples, a Poisson approximation chain for the associated point processes, monotone couplings of Poisson point processes, and Kolmogorov estimates for the Dickman approximation of weighted geometric sums.
\end{abstract}

\maketitle\section{Introduction}

The purpose of this paper is to contribute to the understanding of the statistical behavior of the prime factors arising in random harmonic samples. 
Our approach is based on representing the prime factors of random harmonic samples through an associated Poisson point process. This perspective originates from classical models with a similar structure, such as the ranked cycle lengths of a random permutation normalized by $n$; see \cite{kingman1975random, arratia2003}. A natural analogue in number theory arises from the ranked prime factors of a uniformly chosen integer, normalized by $\log n$; see \cite{billingsley1973} and \cite{arratia2003}. In both cases, one obtains a random sequence of normalized masses that can be encoded as a point process which converges asymptotically to a Poisson-Dirichlet distribution.\\

\noindent Our objective is to study this second model in the setting of harmonic samples and to establish a quantitative rate of convergence, in a suitable Wasserstein-type distance, toward a Poisson-Dirichlet distribution conditioned on a natural mass constraint. The classical uniform integer model admits quantitative metric estimates, as discussed below. Our goal is to develop the corresponding quantitative theory in the harmonic setting, where the limiting object is modified by a natural random mass constraint.\\

\noindent To describe in more detail the results from \cite{kingman1975random,arratia2003,billingsley1973}, we recall the stick-breaking construction associated with the Poisson-Dirichlet law. Let $\{U_k\ ;\ k\geq 1\}$ be independent and identically distributed random variables with uniform distribution on $(0,1)$. Define
\begin{align}\label{eq:GEMstickbreakingdef}
L_k
=
U_k\prod_{i<k}(1-U_i),
\qquad k\geq 1,
\end{align}
where the empty product is interpreted as one. Let $(V_k\ ;\ k\geq 1)$ denote the decreasing rearrangement of $(L_k\ ;\ k\geq 1)$. The sequence $(V_k\ ;\ k\geq 1)$ has the classical Poisson-Dirichlet distribution with parameter one. We encode this mass partition as the random measure
\begin{align}\label{eq:mathcalXdirichdef}
\mathcal{X}
  &=\sum_{k\geq 1}\delta_{V_k}.
\end{align}
For convenience, we adopt this construction as a working representation and refer the reader to \cite{sethuraman1994} for further discussion. We now recall the results mentioned above in a form that will be useful for our purposes. In the sequel, we denote by \(\mathcal{M}((0,\infty))\) the space of Radon measures on \((0,\infty)\), endowed with the vague topology.

\subsection{Cycle factorizations of random permutations}

\noindent As a starting point, we recall the classical result describing the cycle structure of random permutations, stated in the point-process formulation that will serve as a reference for the presentation of our main results.

\begin{theorem}\label{startArratia2perm}
Let $\pi_n$ be a uniformly random permutation of $\{1,\dots,n\}$ and let
\begin{align*}
L_1^{(n)} \geq L_2^{(n)} \geq \cdots \geq L_{K_n}^{(n)}
\end{align*}
denote the cycle lengths of $\pi_n$ arranged in decreasing order, where
$K_n$ is the number of cycles of $\pi_n$. Define the random measures
\begin{align*}
\mathcal{X}_n
:=
\sum_{k=1}^{K_n}\delta_{L_k^{(n)}/n}.
\end{align*}
Then $\mathcal{X}_n$ converges in law to a Poisson-Dirichlet distribution.
\end{theorem}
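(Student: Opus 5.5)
The plan is to use the Chinese restaurant process (Feller coupling), which realizes a uniform permutation through a sequential stick-breaking of $\{1,\dots,n\}$. A uniform permutation of $\{1,\dots,n\}$ can be built cycle by cycle. The cycle containing $1$ has length $C_1$ uniform on $\{1,\dots,n\}$. Conditionally on $C_1$, the remaining $n-C_1$ elements carry a uniform permutation, so the next cycle (the one containing the smallest remaining element) has length uniform on $\{1,\dots,n-C_1\}$, and so on. I will realize this with the same uniforms $U_k$ from \eqref{eq:GEMstickbreakingdef}. Set $R_0=n$ and
\begin{align*}
C_k=\lceil U_k R_{k-1}\rceil,\qquad R_k=R_{k-1}-C_k,
\end{align*}
stopping once $R_k=0$. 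Since $\lceil U m\rceil$ is uniform on $\{1,\dots,m\}$, the multiset $\{C_k\}$ has exactly the law of the cycle lengths of $\pi_n$.

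\textbf{Step 1: coupling estimate.} By induction on $k$, using $|\lceil x\rceil-x|\le 1$, I will show that for each fixed $k$,
\begin{align*}
\left|\frac{C_k}{n}-L_k\right|\le \frac{2k}{n}\quad\text{a.s., as long as } R_{k-1}>0.
\end{align*}
I will also show that the event $\{R_{k-1}>0\}$ has probability tending to one, because $R_{k-1}/n\approx\prod_{i<k}(1-U_i)>0$ a.s. Hence, for every fixed $K$, the vector $(C_1/n,\dots,C_K/n)$ converges a.s. to $(L_1,\dots,L_K)$, where $C_k:=0$ after absorption.

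\textbf{Step 2: the tail is uniformly small.} The total mass left after $K$ steps satisfies $R_K/n\to\prod_{i\le K}(1-U_i)$ a.s. This limit tends to $0$ as $K\to\infty$. Consequently, for any $\varepsilon>0$, I can choose $K$ such that
\[
\limsup_n \Pb\left(\sum_{k>K}C_k/n>\varepsilon\right)<\varepsilon .
\]
In particular, no cycle beyond index $K$ exceeds $\varepsilon$ in normalized length.

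\textbf{Step 3: vague convergence.} Fix a test function $f$ continuous with compact support in $(0,\infty)$, say supported in $[\varepsilon,\infty)$. On the event from Step 2, only the indices $k\le K$ can contribute, so
\[
\int f\,d\mathcal{X}_n=\sum_{k\le K}f(C_k/n)\quad\text{and}\quad\int f\,d\mathcal{X}=\sum_{k\le K}f(L_k)+\text{(terms with }L_k\le\varepsilon).
\]
Here the second expression uses $\sum_{k>K}L_k=\prod_{i\le K}(1-U_i)<\varepsilon$ with high probability. Continuity of $f$ together with Step 1 gives $\int f\,d\mathcal{X}_n\to\int f\,d\mathcal{X}$ in probability. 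Convergence in probability of $\int f\,d\mathcal{X}_n$ for every such $f$ yields convergence in law in $\mathcal{M}((0,\infty))$ with the vague topology. Since both measures are invariant under reordering, the law of $\mathcal{X}$ is that of the ranked sequence $(V_k)$.

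\textbf{Main obstacle.} I expect the delicate point to be the boundary behaviour of test functions near $0$. Steps 2 and 3 handle this because $f$ vanishes near $0$. I also need the rounding errors $2k/n$ not to accumulate, which is fine because $K$ is fixed before $n\to\infty$.
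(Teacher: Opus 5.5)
The paper gives no proof of this theorem. It is quoted as classical background from Kingman and Arratia--Barbour--Tavar\'e. The paper's own tools (Poissonization, monotone couplings of Poisson processes, Dickman estimates, and conditioning on the total mass through $A^o$) are developed only for the harmonic prime model.

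Your argument is correct and self-contained. It is the classical sequential stick-breaking coupling: record the cycle of the smallest remaining element, realized with the same uniforms $U_k$. Its main advantage is that it uses exactly the representation \eqref{eq:GEMstickbreakingdef}--\eqref{eq:mathcalXdirichdef} that the paper adopts as its working definition of the limit. So no separate identification of $PD(1)$ is needed, and you in fact get almost sure vague convergence under the coupling. What it does not provide is a template for the prime setting. There is no exact sequential uniform stick-breaking for $\mathfrak P(H_n)$. That is why the paper works with independent geometric variables conditioned on a global constraint, and transfers unconditioned couplings through Lemma~\ref{lem:conditioning-transfer}. The analogous route for permutations would be the conditioning relation between cycle counts and independent Poisson$(1/j)$ variables, which your approach sidesteps entirely.

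There are two technical remarks on your write-up. First, to get an error linear in $k$, you must carry $e_k:=|R_k/n-\prod_{i\le k}(1-U_i)|$ through the induction via the exact identity
\begin{align*}
\frac{R_k}{n}-\prod_{i\le k}(1-U_i)
=
(1-U_k)\Big(\frac{R_{k-1}}{n}-\prod_{i<k}(1-U_i)\Big)-\frac{\theta_k}{n},
\qquad
\theta_k:=\lceil U_kR_{k-1}\rceil-U_kR_{k-1}\in[0,1).
\end{align*}
This gives $e_k\le k/n$ and $|C_k/n-L_k|\le U_ke_{k-1}+\theta_k/n\le k/n$. A cruder induction gives $2^k/n$, which still suffices for fixed $K$. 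The restriction $R_{k-1}>0$ is unnecessary, since $\lceil U_k\cdot 0\rceil=0$. Second, the last step passes from convergence of $\int f\,d\mathcal X_n$ for each nonnegative $f\in C_c((0,\infty))$ to convergence in law in the vague topology. That is a standard criterion for random measures (e.g.\ Kallenberg) and should be cited.

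Finally, your coupling is actually quantitative. Summing $U_ke_{k-1}\le e_{k-1}-e_k+\theta_k/n$ gives $\sum_{k\le K}|C_k/n-L_k|\le 2K/n$. The tail contributes at most $R_K/n+\prod_{i\le K}(1-U_i)$, whose mean is at most $2^{1-K}+K/n$. Since decreasing rearrangement is an $\ell^1$-contraction, choosing $K\asymp\log n$ yields a bound of order $\log n/n$ in $d_{W,\mathcal S}$. This matches the rate quoted after the theorem.
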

\noindent On the space of point measures on $(0,\infty)$ assigning finite mass to compact sets, which is the natural state space of $\mathcal{X}_n$, one may introduce several natural notions of distance. For the purposes of this paper, we consider the metric described in Section~\ref{se:prelimpointprocc}, obtained by viewing the atoms of a measure as a decreasing sequence and equipping this sequence with an $L^1$-type metric. This structure naturally induces the corresponding notion of $1$-Wasserstein distance.\\

\noindent Having recalled the qualitative convergence described in Theorem~\ref{startArratia2perm}, we turn to the problem of quantifying this convergence under a suitable metric. A fundamental result in this direction, established in \cite{arratia1997b}, shows that, with respect to a suitable metric denoted by $\mathrm{dist}$,
\begin{align*}
\mathrm{dist}(\mathcal{X}_n, \mathcal{X})
\;\le\;
C\,\frac{\log n}{n},
\end{align*}
for some constant $C>0$, where $\mathcal{X}$ denotes the Poisson-Dirichlet limit. The metric under consideration will turn out to be precisely the one introduced in Section~\ref{se:prelimpointprocc}. We now present the corresponding result in the number-theoretic setting.

\subsection{Prime factorizations of integer uniform samples}

\noindent Observe that the main structural features of the discussion above are the following: we considered objects that admit a canonical decomposition into elementary components, namely cycles; we arranged these components according to their sizes; and we observed that the resulting collection exhibits a universal limiting behavior when viewed as a point process. It is therefore natural to investigate whether similar limiting structures arise in other settings where objects admit multiplicative decompositions. For the purposes of the present paper, the most relevant instance is the following. The result can be consulted in \cite{billingsley1973} and \cite{arratia2003}.
	
\begin{theorem}\label{startArratia2}
Let $J_n$ be uniformly distributed on $\{1,\dots,n\}$ and let
\begin{align*}
P_1(J_n) \ge P_2(J_n) \ge \cdots \ge P_{K_n}(J_n)
\end{align*}
denote the prime factors of $J_n$, counted with multiplicity and arranged in decreasing order, where $K_n$ is the number of prime factors of $J_n$ counted with multiplicity. Define the random measure
\begin{align*}
\mathcal{X}_n
:=
\sum_{k=1}^{K_n}\delta_{\log P_k(J_n)/\log n}.
\end{align*}
Then $\mathcal{X}_n$ converges in law to a Poisson-Dirichlet distribution.
\end{theorem}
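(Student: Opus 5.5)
Since convergence in law of point processes on $(0,\infty)$ in the vague topology is determined by the finite-dimensional laws of the ranked atoms, I would reduce the statement to showing that, for each fixed $r\ge 1$, the vector $\bigl(\log P_1(J_n)/\log n,\dots,\log P_r(J_n)/\log n\bigr)$ (padded with zeros when $K_n<r$) converges in law to $(V_1,\dots,V_r)$. This suffices because atoms accumulate only at $0$, so each compact set $[a,b]\subset(0,\infty)$ sees only finitely many of the largest atoms. I would also restrict to the set where $J_n$ is not divisible by $p^2$ for any prime $p\ge n^{\varepsilon}$. Its complement has probability at most $\sum_{p\ge n^\varepsilon}p^{-2}\to 0$. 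On this set the atoms in $(\varepsilon,\infty)$ come from distinct primes, so multiplicity plays no role above level $\varepsilon$.

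The key computation is an asymptotic density for the event that the largest $r$ prime factors lie in prescribed intervals. Fix $1> x_1>x_2>\dots>x_r>0$ with $x_1+\dots+x_r<1$. Writing $J_n=p_1\cdots p_r\,m$, where $p_i\approx n^{x_i}$ and $m$ has all prime factors below $p_r$, the count of such $J_n\le n$ is
\begin{align*}
\sum_{p_1,\dots,p_r}\Psi\Bigl(\frac{n}{p_1\cdots p_r},\,p_r\Bigr).
\end{align*}
I would evaluate it with the de Bruijn asymptotic $\Psi(y,y^{1/u})\sim \rho(u)\,y$, uniformly for $u$ in compacts, where $\rho$ is the Dickman function. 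For the sum over primes I would use Mertens' theorem in the form $\sum_{p\in[n^{a},n^{b}]}1/p\to\log(b/a)$. Dividing by $n$ and letting the boxes shrink gives the joint density
\begin{align*}
\frac{1}{x_1\cdots x_r}\,\rho\Bigl(\frac{1-x_1-\dots-x_r}{x_r}\Bigr).
\end{align*}
This is exactly the known density of the top $r$ coordinates of a Poisson-Dirichlet(1) vector. I would verify that identification from the stick-breaking representation \eqref{eq:GEMstickbreakingdef}, or via the equivalent description of $\mathcal{X}$ as a Poisson process with intensity $dx/x$ conditioned on total mass one.

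The main obstacle is uniformity. The asymptotic for $\Psi$ must hold uniformly over the region where $u=(1-\sum x_i)/x_r$ stays bounded. The boundary regimes also need care: small $x_r$ makes $u$ large, and $\sum x_i$ close to $1$ makes $y=n^{1-\sum x_i}$ small. I would handle both by truncation. I would restrict to $x_r\ge\delta$ and $1-\sum x_i\ge\delta$, and bound the leftover probability uniformly in $n$. For this I would use the crude bound $\Psi(y,z)\le y$ together with Mertens, which gives mass $O(\delta\,\log(1/\delta)^{r})$ or similar. Using the normalization $\int$ density $=1$ on the limit side then shows no mass escapes, which yields the convergence in law of the vector, and hence of $\mathcal{X}_n$.
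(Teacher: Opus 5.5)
The paper does not prove this statement. It is recalled as classical background, with references to Billingsley and to Arratia--Barbour--Tavar\'e, so there is no internal argument to match.

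Your proposal is a correct version of the classical analytic route, and it differs genuinely from the probabilistic machinery the paper builds for its own harmonic result. You get explicit finite-dimensional densities from two analytic inputs: Dickman's asymptotic $\Psi(y,y^{1/u})\sim\rho(u)y$, uniform for bounded $u$, and Mertens. This works one $r$ at a time and gives no rates. The paper instead models the valuations by independent geometric or Poisson variables, passes to a Poisson cloud with intensity $x^{-1}\,dx$, and identifies the limit by conditioning that cloud on its total mass. That approach controls all atoms at once in the $\ell^1$ metric and produces rates. Its price is that it needs a representation of the divisibility structure, which is an exact conditioning for harmonic samples but requires an extra tilt or approximation for uniform ones.

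Your key steps check out.
\begin{itemize}
\item The reduction to ranked atoms is sound. Every configuration has total mass at most one, so for $f$ supported in $[a,\infty)$ the quantity $\mathcal X_n(f)$ is a continuous function of the first $\lceil 1/a\rceil$ ranked atoms.
\item The identity $\#\{J\le n:(P_1(J),\dots,P_r(J))=(p_1,\dots,p_r)\}=\Psi\bigl(n/(p_1\cdots p_r),p_r\bigr)$ is exact for $p_1\ge\dots\ge p_r$. Your restriction away from large square divisors is therefore harmless but unnecessary.
\item The density $f_r(x)=\frac{1}{x_1\cdots x_r}\rho\bigl(\frac{1-x_1-\dots-x_r}{x_r}\bigr)$ is indeed the $PD(1)$ one. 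The quickest check uses the Poisson description. For the cloud with intensity $x^{-1}\,dx$ on $(0,1]$, the top $r$ atoms have density $x_r/(x_1\cdots x_r)$. By scale invariance, the mass of the remaining atoms has density $x_r^{-1}e^{-\gamma_{\mathrm E}}\rho(t/x_r)$. Dividing by the density $e^{-\gamma_{\mathrm E}}\rho(1)$ of the total mass at $1$ gives exactly $f_r$.
\end{itemize}

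One claim in your last paragraph is wrong, although the proof does not need it. The crude bound $\Psi(y,z)\le y$ cannot control the region $\{x_r<\delta\}$. Already for $r=1$ it only gives $\Pb[P_1(J_n)<n^{\delta}]\le\sum_{p<n^{\delta}}1/p$, which grows like $\log\log n$. The smallness of this event comes precisely from the decay of $\rho(u)$ for large $u$, and that bound discards it. The crude bound does work in the other regime, $\sum_i x_i>1-\delta$ with $x_r\ge\delta$, where it gives $O(\delta\log(1/\delta)^{r-1})$.

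The normalization argument you mention is what actually closes the proof, and you should rely on it alone. Take boxes $B$ compactly contained in $R_\delta=\{x_1>\dots>x_r>\delta,\ \sum_i x_i<1-\delta\}$. Convergence of the probability of each such box to $\int_B f_r$ uses the Dickman asymptotic only with $u\le1/\delta$ and $y\ge n^{\delta}$. It yields $\limsup_n\Pb[\text{the ranked vector lies outside } R_\delta]\le1-\int_{R_\delta}f_r$, and the right-hand side tends to $0$ as $\delta\to0$. This handles both boundary regimes, ties, and the event $K_n<r$ at once.
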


\noindent The preceding theorem is stated in the classical formulation, where prime factors are counted with multiplicity. In the present paper, however, we work with the set of distinct prime divisors. This is the natural formulation for the point-process representation used below. The distinction between prime factors counted with multiplicity and distinct prime divisors does not affect the limiting behavior on the logarithmic scale, since repeated prime powers give a negligible contribution after normalization by $\log n$.\\

\noindent The arguments underlying Theorem~\ref{startArratia2} are closely related to the classical Kubilius method; see \cite{kubilius1964, tenenbaum1999, chen2023kubilius}. This method approximates the joint law of finitely many prime multiplicities by independent geometric random variables with rapidly decaying total variation error. Such a reduction replaces intricate divisibility questions with tractable problems involving independent variables. However, in the form needed here, the method gives finite-dimensional control: it controls fixed tuples of primes, typically in the regime of small prime factors.\\

\noindent Quantitative estimates are also known for the Poisson-Dirichlet convergence theorem for uniform random integers, originally due to Billingsley. In that classical formulation, the prime factors are listed with multiplicity. More precisely, if \(J_n\) is uniformly distributed on \(\{1,\dots,n\}\), one writes
\begin{align*}
J_n
=
P_1(J_n)P_2(J_n)\cdots,
\end{align*}
where the factors are arranged in non-increasing order, and the sequence is completed by setting the remaining entries equal to one. Thus repeated prime powers are encoded by repeated entries in the sequence. This point will be important below, since the harmonic model studied in the present paper keeps only distinct prime divisors.\\

\noindent For the classical uniform model, Arratia~\cite{MR1919568} constructed a coupling between the ordered prime factors of \(J_n\) and a Poisson-Dirichlet mass partition \((V_i\ ;\ i\geq 1)\) such that, for some constant \(C>0\),
\begin{align*}
\mathbb E \sum_{i\ge1}
\left|
\log P_i(J_n)-(\log n)V_i
\right|
\leq
C\log\log n.
\end{align*}
Equivalently, after normalization by \(\log n\), this yields a bound of order $\frac{\log\log n}{\log n}$ in the corresponding \(\ell^1\)-type metric for the normalized logarithmic prime factors. More recently, Haddad and Koukoulopoulos~\cite{MR4963450} sharpened Arratia's coupling to the optimal order. More precisely, they showed that there exists a constant \(C>0\) and a coupling such that, for all sufficiently large \(n\),
\begin{align*}
\mathbb E \sum_{i\ge1}
\left|
\log P_i(J_n)-(\log n)V_i
\right|
\leq
C.
\end{align*}
They also proved that this order is optimal, in the sense that the infimum of the corresponding expected coupling cost over all admissible couplings is bounded away from zero as \(n\to\infty\).\\

\noindent These results provide the closest reference point for the present work, but they do not directly address the harmonic model considered here, where the total logarithmic mass remains random in the limit and where the point process is built from distinct prime divisors.\\

\noindent A natural approach is to work in a probabilistic framework where the multiplicative structure admits a representation in terms of independent variables, possibly conditioned on a suitable event; see \cite{chen2021, bernal2025, jaramilloyang2024}. We now formalize this perspective by introducing the harmonic model and stating our main results.

\section{Main results}\label{sec:mainresults}

\noindent In this work, we consider harmonic samples instead of the uniform distribution on $[n]:=\{1,\dots,n\}$. As discussed in Section \ref{se:repofharmonicprimefact}, this model retains the relevant statistical features of random integers while providing a tractable computational framework. Moreover, results in the harmonic setting can be transferred to the uniform model through a total variation approximation obtained by multiplying the sample by a suitable prime factor, as explained in full detail in \cite{chen2021}. We now define the harmonic model. Let \(H_n\) be a random variable defined over a given probability space $(\Omega,\mathcal{F},\mathbb{P})$, with 
\begin{align*}
\mathbb{P}[H_n = k] = \frac{1}{\mathfrak{l}_n k},
\end{align*}
for $k = 1,\dots,n$, where
\begin{align*}
\mathfrak{l}_n
:=
\sum_{j=1}^n \frac{1}{j}
\end{align*}
is the normalizing constant. Each realization of \(H_n\) admits the unique prime factorization
\begin{align*}
H_n = \prod_{p \le n} p^{\alpha_p(H_n)},
\end{align*}
for some \(\alpha_p(H_n)\) belonging to $\mathbb{N}_0$. Denote by $\mathcal{P}$ the set of prime numbers and let $\mathfrak{P}(h)$ denote the element in $2^{\mathcal{P}}$, defined through 
\begin{align*}
\mathfrak{P}(h)
:=
\{ p\in\mathcal{P}  \ ;\  p\  \textit{divides } h\},
\end{align*}
for $h$ in the natural numbers. Our specific goal is to describe the typical shape of the set $\mathfrak{P}(H_n)$. Given a function \(f:(0,\infty)\to\mathbb{R}\) and a subset \(U\subset(0,\infty)\), we denote by
\begin{align*}
f(U)
:=
\{f(x)\ ;\ x\in U\}
\end{align*}
the image of \(U\) under \(f\). We then consider the renormalized set
\begin{align}\label{setnormdef}
\log(\mathfrak{P}(H_n))/\log(n),	
\end{align}
and show that it exhibits a non-trivial random asymptotic behavior in distribution as $n$ tends to infinity. In order to shed clarity on the precise manner in which a random set may admit a meaningful limit, without resorting to intricate metric structures over families of sets, we adopt a representation based on embeddings into spaces of measures. Observe that a locally finite subset $U$ of $(0,\infty)$ can be naturally identified with the point measure in $\mathcal{M}((0,\infty))$, through
\begin{align*}
\iota[U] := \sum_{x\in U} \delta_x.
\end{align*}
The advantage of transferring the problem from describing sets to describing elements in $\mathcal{M}((0,\infty))$ is that it allows one to study scaling limits through normalizations induced by push-forward transformations. More precisely, given a sequence of random measures 
\begin{align*}
\{\iota\circ \mathfrak{P}(H_n) \ ;\ n\ge 1\},
\end{align*}
one may seek normalizing maps $T^n$ for which the push-forward measures
\begin{align*}
T^n_{\#}\big[\iota\circ \mathfrak{P}(H_n)\big]
\end{align*}
have a non-trivial asymptotic behavior. The first step is to identify the relevant scale. The uniform model recalled above suggests the logarithmic scale: there, prime factors are normalized by $\log n$, leading to the Poisson-Dirichlet limit. It is therefore natural to ask whether the same scale also yields a meaningful limit in the harmonic setting.\\

\noindent Before adopting this normalization, however, it is important to point out a fundamental difference between the uniform and harmonic models. In the uniform case, the logarithmic normalization leads to a limiting mass partition whose total mass is equal to one. In the harmonic case, the same normalization leads to a different behavior. Indeed, at the level of the total logarithmic scale, the definition of the harmonic weights gives
\begin{align*}
\Pb\left[\log H_n/\log n\le t\right]
  &=
\frac{\mathfrak{l}_{\lfloor n^t\rfloor}}{\mathfrak{l}_n},
\end{align*}
for $0<t<1$. Since $\mathfrak{l}_m$ grows logarithmically in $m$, the right-hand side converges to $t$. Thus the total logarithmic scale of $H_n$ does not concentrate at one, but has a non-degenerate limiting behavior. Consequently, the total logarithmic mass available to the prime factors should not be expected to be asymptotically fixed. The limiting object should therefore allow the total logarithmic mass to be random. Thus, the natural candidate is not an ordinary Poisson-Dirichlet mass partition, but an attenuated Poisson-Dirichlet type object.\\

\noindent With this distinction in mind, we adopt the logarithmic normalization
\begin{align*}
T^n(x):=\frac{\log x}{\log n}.
\end{align*}
In this case, a direct computation shows that
\begin{align*}
T^n_{\#}\big[\iota\circ \mathfrak{P}(H_n)\big]
=
\iota\!\left[\frac{\log(\mathfrak{P}(H_n))}{\log n}\right].
\end{align*}
Consequently, studying the scaling limit of the left-hand side is equivalent to analyzing \eqref{setnormdef}.\\

\noindent A particularly transparent formulation of our main results is obtained by working with ordered sequences. To make this precise, we introduce the space 
\begin{align*}
\mathcal{S}
=
\left\{
(x_i\ ;\ i\ge1)\in[0,\infty)^{\mathbb{N}}
\ ;\
x_i\ge x_{i+1},
\quad
\sum_{i\ge1}x_i\le 1
\right\}.
\end{align*}
Since the elements of \(\mathfrak{P}(H_n)\) are distinct, they can be uniquely arranged in decreasing order. We denote by
\begin{align*}
\boldsymbol{\Xi}^{n}
=
(\Xi_1^n,\Xi_2^n,\dots)
\end{align*}
the sequence whose coordinates list the elements of \(\mathfrak{P}(H_n)\) in decreasing order, with the convention that $\Xi_k^n=1$ when fewer than $k$ prime divisors are present. This representation contains exactly the same information as the random set \(\mathfrak{P}(H_n)\). The normalized sequence $\log\boldsymbol{\Xi}^n/\log n$ is $\mathcal S$-valued. Let \(\vartheta_n\) denote its law. Over \(\mathcal{S}\), we consider the metric
\begin{align*}
d_{\mathcal{S}}(\mathbf{x},\mathbf{y})
=
\sum_{k\ge1}|x_k-y_k|.
\end{align*}
This metric induces the corresponding $1$-Wasserstein distance $d_{W,\mathcal{S}}$ on the space $\mathcal{M}_1(\mathcal{S})$ of probability measures over $\mathcal{S}$, defined by
\begin{align*}
d_{W,\mathcal{S}}(\mu,\nu)
:=
\inf_{\pi \in \Pi(\mu,\nu)}
\int_{\mathcal{S}\times\mathcal{S}}
d_{\mathcal{S}}(\mathbf{x},\mathbf{y})\,
\pi(d\mathbf{x},d\mathbf{y}),
\end{align*}
where $\Pi(\mu,\nu)$ denotes the set of probability measures on $\mathcal{S}\times\mathcal{S}$ with marginals $\mu$ and $\nu$. Let \(\theta_{\mathcal{S}}\) denote the distribution on \(\mathcal{S}\) of a Poisson-Dirichlet mass partition arranged in decreasing order.\\

\noindent Furthermore, we introduce an attenuation operation on laws over $\mathcal S$. Let $\gamma$ be an element of $\mathcal M_1(\mathcal S)$, and let $\mathbf X=(X_1,X_2,\dots)$ be an $\mathcal S$-valued random variable with distribution $\gamma$. Let $U$ be a uniform random variable on $(0,1)$, independent of $\mathbf X$. We define the uniform attenuation of $\gamma$ by
\begin{align*}
\mathfrak A[\gamma]
:=
\mathcal L(U\mathbf X),
\end{align*}
where
\begin{align*}
U\mathbf X:=(UX_1,UX_2,\dots).
\end{align*}
This operation preserves the order of the atoms but allows the total mass to be randomly attenuated. In particular, when $\gamma=\theta_{\mathcal S}$, we call $\mathfrak A[\theta_{\mathcal S}]$ the uniformly attenuated Poisson-Dirichlet law.\\ 

\noindent With this discussion in mind, we can now present our main result, which reads as follows.

\begin{theorem}\label{theorem:main}
There exist constants \(C>0\) and $N\in\N$ such that, for all $n\geq N$, 
\begin{align*}
d_{W,\mathcal{S}}\big(\vartheta_n,\mathfrak{A}[\theta_{\mathcal S}]\big)
\le
\frac{C}{\log n}\,(\log\log n)^{\mathrm e-\frac 32}.
\end{align*}
\end{theorem}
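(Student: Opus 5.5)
Here is the plan. I would use the conditioned geometric representation of $H_n$ and then pass to Poisson point processes. Let $\{\xi_p\,;\,p\le n\}$ be independent with $\mathbb P[\xi_p\ge k]=p^{-k}$. Then the vector $(\alpha_p(H_n))_{p\le n}$ has the law of $(\xi_p)_{p\le n}$ conditioned on the event $\{\sum_p \xi_p\log p\le \log n\}$. Indeed, the product weight $\prod_p p^{-\xi_p}(1-1/p)$ is proportional to $1/k$, which is exactly the harmonic weight. The set $\mathfrak P(H_n)$ is then the set of primes with $\xi_p\ge1$, conditioned on this event. Accordingly, $\log\boldsymbol\Xi^n/\log n$ is the decreasing rearrangement of the atoms of the point process $\sum_p \1\{\xi_p\ge1\}\delta_{\log p/\log n}$, conditioned on the weighted geometric sum $W_n:=\sum_p\xi_p\log p/\log n$ being at most $1$.

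On the limit side, let $\Pi$ be a Poisson point process on $(0,1]$ with intensity $x^{-1}dx$, a Dickman Poisson cloud, with total mass $S=\sum_{x\in\Pi}x$. Then $S$ is Dickman distributed and has density $e^{-\gamma}\rho(s)$, which equals $e^{-\gamma}$ on $[0,1]$. The normalized atoms $\Pi/S$ are independent of $S$ and follow the Poisson--Dirichlet law. Hence $\Pi$ conditioned on $\{S\le1\}$ has the law $\mathcal L(U\mathbf V)$ with $U$ uniform, which is $\mathfrak A[\theta_{\mathcal S}]$. This identification is the first step.

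The second step is a chain of approximations, each bounded in $d_{W,\mathcal S}$.

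(i) Replace $\1\{\xi_p\ge1\}$ by independent Poisson/Bernoulli indicators of parameter $1/p$. The multiplicities $\xi_p\ge2$ contribute total logarithmic mass at most $\sum_p p^{-2}\log p/\log n=O(1/\log n)$, both in the sum $W_n$ and in the sequence metric.

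(ii) Couple the Bernoulli$(1/p)$ point process of $\log p/\log n$ with a Poisson process of intensity $\sum_p p^{-1}\delta_{\log p/\log n}$. By Mertens this intensity is close to $x^{-1}dx$ on $[(\log n)^{-1},1]$. A monotone coupling on intervals handles the discrepancy. Thinning the superfluous points, and displacing points within each prime-gap interval by $O(1/\log n)$ with weighted error $\sum_p p^{-1}\cdot\log p/\log n\cdot(\text{displacement})$, costs $O(1/\log n)$ in expected $\ell^1$ mass. Points below $(\log n)^{-1}$ contribute mass $O(1/\log n)$.

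(iii) Handle the conditioning. For two processes $\Pi_1,\Pi_2$ coupled so that $\E d_{\mathcal S}\le\varepsilon$, whose masses are conditioned on $\{S_i\le1\}$, I would construct a coupling of the conditioned laws. On the event where both masses are on the same side of $1$, the unconditioned coupling is used. The mismatch event has probability at most $\Pb[|S_1-1|\le\delta]+\Pb[|S_1-S_2|>\delta]$, and on it one pays at most $2$, since all sequences have mass at most $1$. Dividing by $\Pb[S\le1]\approx e^{-\gamma}$ then gives an error of order $\varepsilon+\delta+\varepsilon/\delta$ plus a Kolmogorov-type term $\sup_t|\Pb[W_n\le t]-\Pb[S\le t]|$ near $t=1$.

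The main obstacle is the Kolmogorov estimate for the Dickman approximation of the weighted geometric sum $W_n$, and the optimization it forces. The naive choice $\delta\sim\varepsilon^{1/2}$ would only give $(\log n)^{-1/2}$. To get close to $1/\log n$, I would avoid a crude $\delta$ and instead use a monotone coupling, in which the approximating Poisson process dominates or is dominated pointwise. Under such a coupling $S_1\le S_2$ holds on a large event, so the mismatch event is $\{S_1\le1<S_2\}$. Its probability is bounded by $\E[(S_2-S_1)\,\cdot\,\text{density near }1]$, which is $O(1/\log n)$ up to the Kolmogorov term.

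That Kolmogorov term is where the $(\log\log n)^{\mathrm e-3/2}$ factor should enter. I would attempt the estimate via Stein's method for the Dickman law, using the size-bias identity $\E[Wf(W)]=\E[f(W+U')]$ compared against the exact geometric size-bias. The Mertens errors and the small-prime region (primes below $\log n$, where the Poisson approximation is worst) should then produce the $\log\log n$ loss. Finally, I would sum the errors of (i)–(iii) to conclude Theorem~\ref{theorem:main}.
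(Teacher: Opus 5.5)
Your overall architecture matches the paper's. It uses the conditioned geometric representation and a chain from the geometric process, through the distinct-prime process and a Poisson process with intensity $\sum_p p^{-1}\delta_{\log p/\log n}$, to the Dickman cloud. The conditioning-transfer step has its error governed by $\Pb[\{S_1\le 1\}\Delta\{S_2\le 1\}]$, and the limit is identified by scale invariance of $x^{-1}dx$. The gap is in the one step that decides the rate: bounding these event discrepancies by roughly $1/\log n$.

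Your claim $\Pb[S_1\le 1<S_2]\lesssim \E[(S_2-S_1)\cdot\text{density near }1]$ has no justification. $S_2-S_1$ is built from the same variables as $S_1$, and $S_1$ is discrete, so the only anti-concentration available is the window bound $\Pb[1-\tau<S_1\le 1]\le C\tau+C/\log n$ coming from Lemma~\ref{lem:kolmmain}. You are therefore back to $\Pb[S_2-S_1>\tau]+C\tau$. With only first-moment control of $S_2-S_1$, this is exactly the $(\log n)^{-1/2}$ rate you wanted to avoid.

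Monotonicity does not fix this, and in the Bernoulli-to-Poisson step it is not even available. There the two masses have the same mean $\frac{1}{\log n}\sum_{p\le n}\frac{\log p}{p}$ but different laws. Hence no coupling satisfies $S_1\le S_2$ (or $S_2\le S_1$) almost surely, and the discrepancy is genuinely two-sided.

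The missing ingredient is exponential concentration of the coupling errors. The paper controls the two-sided quantity $D_n=\frac{1}{\log n}\sum_{p\le n}|\1_{\{\xi_p\ge1\}}-\tilde\xi_p|\log p$ and the higher-multiplicity remainders by exponential Chebyshev. It controls the shift couplings by Chernoff bounds: matched atoms move by at most $2\delta_n$, and their number is Poisson with mean $O(\log\log n)$. Together these make $\Pb[|S_1-S_2|>\tau]$ negligible at $\tau\asymp \log\log n/\log n$, and the window bound then costs $C\tau+C/\log n$.

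This is also where $(\log\log n)^{\mathrm e-\frac32}$ comes from, not from the Kolmogorov term, which has no $\log\log n$ loss. The exponent arises from $\E[p^{sY_p}]\le \exp\{(\mathrm e-\frac32)p^{s-2}\}$ together with the choice $s=1-(\mathrm e-\frac32)/\log\log n$ in the tail bound for $D_n$.

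Two smaller points:
\begin{itemize}
\item With Mertens' $O(1/\log x)$ error, the prime-to-Dickman intensity step costs $O(\log\log n/\log n)$ rather than $O(1/\log n)$. Each of the $\asymp\log\log n$ atoms is displaced by $O(1/\log n)$ in absolute terms. This is harmless since $\mathrm e-\frac32>1$.
\item $\Pi/S$ is independent of $S$ with law $PD(1)$ only on $\{S\le 1\}$. For $S>1$, the constraint that atoms lie in $(0,1]$ biases the normalized configuration. The restricted statement is all you need.
\end{itemize}
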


\begin{remark}
The rate in Theorem \ref{theorem:main} mirrors the classical permutation setting. Arratia \cite{arratia1997b} proved that the $1$-Wasserstein distance between normalized cycle lengths and the Poisson-Dirichlet limit is of order $\log n/n$, where the normalization scale is $n$. In the present setting the normalization scale is $\log n$, yielding the rate
\begin{align*}
\frac{1}{\log n}(\log\log n)^{\mathrm e-\frac32},
\end{align*}
which has the same general form: a logarithmic correction of the normalization scale divided by the scale itself.
\end{remark}

\begin{remark}
As a consequence of Theorem~\ref{theorem:main}, one obtains quantitative convergence for all Lipschitz observables of the normalized prime factors. Indeed, by the Kantorovich-Rubinstein duality, if $\mathbf{X}$ is an $\mathcal{S}$-valued random variable with distribution $\mathfrak A[\theta_{\mathcal S}]$, then for every Lipschitz function $F:\mathcal{S}\to\mathbb{R}$,
\begin{align*}
\big|
\mathbb{E}\big[F\big(\log(\boldsymbol{\Xi}^n)/\log n\big)\big]
-
\mathbb{E}[F(\mathbf{X})]
\big|
\le
C\|F\|_{\mathrm{Lip}}\,
\frac{1}{\log n}(\log\log n)^{\mathrm e-\frac32},
\end{align*}
where $\|F\|_{\mathrm{Lip}}$ denotes the Lipschitz constant of $F$ with respect to $d_{\mathcal S}$. 
The coordinate projection $F(\mathbf{x})=x_k$ is $1$-Lipschitz on $\mathcal{S}$. Therefore the $k$-th coordinate of the normalized sequence,
\begin{align*}
\frac{\log \Xi_k^n}{\log n},
\end{align*}
converges quantitatively toward the $k$-th coordinate of the limit distribution.
\end{remark}

\noindent Following the notation from Section \ref{se:prelimpointprocc}, one may also define a metric $\mathfrak{d}$ on the set of point measures in $\mathcal{M}((0,\infty))$, given by \eqref{eq:dfrakdef}, which in turn induces a natural $1$-Wasserstein distance, denoted by $d_W$. We use the same notation \(\mathfrak A\) for the corresponding attenuation operation on point-measure laws: if \(\mathcal X=\sum_{k\ge1}\delta_{V_k}\), then \(\mathfrak A[\mathrm{Law}(\mathcal X)]\) denotes the law of
\begin{align*}
\sum_{k\ge1}\delta_{U V_k},
\end{align*}
where \(U\) is uniform on \((0,1)\) and independent of \(\mathcal X\). As a consequence of the discussion at the end of Section \ref{se:prelimpointprocc},
\begin{align*}
d_W\!\left(
\mathrm{Law}\big(T^n_{\#}[\iota\circ \mathfrak{P}(H_n)]\big),
\mathfrak{A}[\mathrm{Law}(\mathcal{X})]
\right)
=
d_{W,\mathcal{S}}\big(\vartheta_n,\mathfrak{A}[\theta_{\mathcal S}]\big).
\end{align*}

\noindent In particular, the inequality stated in Theorem \ref{theorem:main} yields
\begin{align*}
d_W\left(
\mathrm{Law}\big(T^n_{\#}[\iota\circ \mathfrak{P}(H_n)]\big),
\mathfrak{A}[\mathrm{Law}(\mathcal{X})]
\right)
\le
\frac{C}{\log n}\,(\log\log n)^{\mathrm e-\frac 32}.
\end{align*}

\noindent The rest of the paper is organized as follows. In Section \ref{se:prelimpointprocc}, we collect the point-process notation and introduce the Wasserstein-type metrics used throughout the paper. In Section \ref{se:repofharmonicprimefact}, we recall the representation of harmonic prime factors in terms of independent geometric random variables conditioned on a global constraint, and we also discuss the Dickman approximation estimates, Mertens-type bounds, and the Poisson-Dirichlet background needed for the proof. Finally, in Section \ref{sec:8}, we prove Theorem \ref{theorem:main} by constructing a chain of point-process approximations and controlling both the Wasserstein error and the discrepancy of the conditioning event.

\section{Preliminaries}

\subsection{Point processes}\label{se:prelimpointprocc}

The material in this section is classical in the theory of stochastic processes; see, for instance, \cite{kyprianou2014} for further details. Let $\mathbbm{X}:=(0,1]$, and let $\mathbf{N}(\mathbbm{X})$ denote the space of counting measures on $\mathbbm{X}$ that are finite on compact subsets. We equip this space with its canonical $\sigma$-field. Let $\mathcal{M}(\mathbbm{X})$ denote the space of Radon measures on $\mathbbm{X}$. A measure $\mu \in \mathcal{M}(\mathbbm{X})$ is called a L\'evy measure if it is finite on intervals bounded away from zero and satisfies
\begin{align*}
\int_{\mathbbm{X}} (1\wedge y^2)\,\mu(dy) < \infty.
\end{align*}
We denote by $\mathrm{Lv}(\mathbbm{X})$ the collection of all such measures.  It is well known that, for every $\mu $ in $ \mathrm{Lv}(\mathbbm{X})$, there exists a probability measure $\rho_{\mu}$ on $\mathbf{N}(\mathbbm{X})$ corresponding to a Poisson point process with intensity measure $\mu$. We say that two L\'evy measures $\mu,\nu$ over $\mathbbm{X}$ satisfy $\mu\preceq\nu$ if
\begin{align*}
\mu[(t,1]] \le \nu[(t,1]],
\qquad t\in(0,1].
\end{align*}
Every $\eta$ in $\mathbf{N}(\mathbbm{X})$ admits a representation
\begin{align*}
\eta = \sum_{i \in I} k_i \,\delta_{x_i},
\end{align*}
where $I$ is at most countable, $\{x_i : i \in I\}$ is the support of $\eta$,
and $k_i$ in $\mathbb{N}$ denote the corresponding multiplicities. The coefficients $k_i$ are integer-valued since $\eta$ is a counting measure, and they may be strictly larger than one whenever the control measure has atoms.\\

\subsubsection{Wasserstein-type metrics over point measures}

\noindent Given $\eta$ in $\mathbf{N}(\mathbbm{X})$, define its decreasing multiplicity list $\mathfrak{q}(\eta)=(\mathfrak{q}_n(\eta)\ ;\ n\ge1)$ in $[0,1]^{\mathbb{N}}$ as follows.
Write
\begin{align*}
\eta=\sum_{i\in I} k_i\,\delta_{x_i},
\end{align*}
for $k_i$ in $\mathbb{N}$, with $\{x_i:i\in I\}=\mathrm{supp}(\eta)$. Form the multiset in which each $x_i$ appears with multiplicity $k_i$, and list its elements in non-increasing order. If $\eta[\mathbbm{X}]$ is finite, fill the list with zeros so as to obtain an infinite sequence. Define, for $\eta,\zeta$ in $\mathbf{N}(\mathbbm{X})$, the extended metric
\begin{align}\label{eq:dfrakdef}
\mathfrak{d}(\eta,\zeta)
:=
\inf_{\sigma\in\mathfrak{S}(\mathbb{N})}
\sum_{n=1}^{\infty}\big|\mathfrak{q}_n(\eta)-\mathfrak{q}_{\sigma(n)}(\zeta)\big|,
\end{align}
where $\mathfrak{S}(\mathbb{N})$ denotes the set of all permutations of $\mathbb{N}$. If all multiplicities are equal to one, the identity permutation is optimal and
\begin{align*}
\mathfrak{d}(\eta,\zeta)
=
\sum_{n=1}^{\infty}
|x_n(\eta)-x_n(\zeta)|,
\end{align*}
where $x_n(\eta)$ and $x_n(\zeta)$ denote the atoms of $\eta$ and $\zeta$, respectively, arranged in non-increasing order and completed with zeros when necessary. This follows from the rearrangement inequality. If $\mathrm{supp}(\eta)=\mathrm{supp}(\zeta)$, then
\begin{align}\label{eq:acouplingbounddfrak}
\mathfrak{d}(\eta,\zeta)
\le
\sum_{i\in I} |k_i-\ell_i|\, x_i,
\end{align}
where $k_i,\ell_i$ denote the multiplicities of $x_i$ in $\eta$ and $\zeta$, respectively. The distance $\mathfrak{d}$ naturally induces a Wasserstein distance in $\mathbf{N}(\mathbbm{X})$, denoted by $d_{W}$. More explicitly, if $\alpha$ and $\beta$ are probability measures on $\mathbf{N}(\mathbbm{X})$, we set
\begin{align*}
d_W(\alpha,\beta)
:=
\inf_{\pi\in\Pi(\alpha,\beta)}
\int_{\mathbf{N}(\mathbbm{X})\times\mathbf{N}(\mathbbm{X})}
\mathfrak d(\eta,\zeta)\,\pi(d\eta,d\zeta).
\end{align*}
We have that for any L\'evy measures $\mu,\nu$,
\begin{align}\label{eq:W_d_bound_by_xTV}
d_W(\rho_\mu,\rho_\nu)
\le
\int_{\mathbbm{X}} x\,|\mu-\nu|(dx).
\end{align}

\noindent This distance can be identified with the Wasserstein distance over probability measures on $\mathcal{S}$, as discussed in Section \ref{sec:mainresults}, whenever the induced ordered sequences belong to $\mathcal S$. More precisely, let $\mu$ be a L\'evy measure on $\mathbbm{X}$ that is absolutely continuous with respect to Lebesgue measure, and let $\rho_\mu$ denote the law of the associated Poisson point process on $\mathbf{N}(\mathbbm{X})$. In this case, the configurations are almost surely simple, so that the decreasing multiplicity list $\mathfrak{q}(\eta)$ coincides with the ordered sequence of atoms of $\eta$. If, in addition, this ordered sequence belongs to $\mathcal S$ almost surely, then the map
\begin{align*}
\eta \longmapsto \mathfrak{q}(\eta)
\end{align*}
gives a measurable representation of the point process as an $\mathcal{S}$-valued random element, and the metric $\mathfrak{d}$ defined in \eqref{eq:dfrakdef} agrees with $d_{\mathcal{S}}$ under this representation. Consequently, if $\mu,\nu$ are absolutely continuous L\'evy measures whose induced ordered sequences belong to $\mathcal S$ almost surely, and if $\vartheta_\mu$ and $\vartheta_\nu$ denote the laws of the corresponding $\mathcal{S}$-valued random elements, then
\begin{align}\label{eq:equivofwassersteins}
d_W(\rho_\mu,\rho_\nu)
=
d_{W,\mathcal{S}}(\vartheta_\mu,\vartheta_\nu).	
\end{align}

\subsubsection{Discrepancies over measures in $\mathbf{N}(\mathbbm{X})$}

\noindent In the sequel, given a pair of $\mathbf{N}(\mathbbm{X})$-valued random variables $(X,Y)$ defined on a common probability space and a measurable set $B\subset \mathbf{N}(\mathbbm{X})$, we define
\begin{align*}
\mathcal E_B[X,Y]
:=
\Pb \big[\{X\in B\}\,\Delta\,\{Y\in B\}\big].
\end{align*}
This quantity measures the discrepancy between $X$ and $Y$ over the set $B$ and satisfies the following triangle-type inequality.

\begin{Lemma}\label{lem:EB_chain_bound}
Let $B\subset \mathbf N(\mathbbm{X})$ be measurable, and let $X_0,\dots,X_m$ be $\mathbf N(\mathbbm{X})$-valued random variables defined on a common probability space. Then
\begin{align*}
\Pb\big[\{X_0\in B\}\Delta\{X_m\in B\}\big]
\le
\sum_{i=1}^m \Pb \big[\{X_{i-1}\in B\}\Delta\{X_i\in B\}\big].
\end{align*}
Equivalently,
\begin{align*}
\mathcal E_B[X_0,X_m]
\le
\sum_{i=1}^m \mathcal E_B[X_{i-1},X_i].
\end{align*}
\end{Lemma}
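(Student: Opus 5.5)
The plan is to reduce the statement to an inclusion of events and then apply the union bound. Write $A_i:=\{X_i\in B\}$ for $i=0,\dots,m$, all of which are events in $\mathcal F$ because $B$ is measurable and the $X_i$ are random elements of $\mathbf N(\mathbbm{X})$. The symmetric difference $A_0\Delta A_m$ is the event that the indicators $\1_{A_0}$ and $\1_{A_m}$ take different values. The key step is the set-theoretic inclusion
\begin{align*}
A_0\,\Delta\, A_m\subset \bigcup_{i=1}^m \big(A_{i-1}\,\Delta\, A_i\big).
\end{align*}

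To verify this inclusion, fix $\omega\in A_0\Delta A_m$. Suppose, for contradiction, that $\omega$ belongs to none of the sets $A_{i-1}\Delta A_i$. Then $\1_{A_{i-1}}(\omega)=\1_{A_i}(\omega)$ for every $i=1,\dots,m$. Chaining these equalities gives $\1_{A_0}(\omega)=\1_{A_m}(\omega)$, which contradicts the choice of $\omega$.

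An equivalent pointwise route uses the identity $\1_{A\Delta C}=|\1_A-\1_C|$ together with the triangle inequality for real numbers:
\begin{align*}
\1_{A_0\Delta A_m}=\Big|\sum_{i=1}^m(\1_{A_i}-\1_{A_{i-1}})\Big|\le\sum_{i=1}^m\1_{A_{i-1}\Delta A_i}.
\end{align*}
Taking expectations on both sides then gives the claim directly.

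Either way, the inclusion combined with countable subadditivity of $\Pb$ yields
\begin{align*}
\Pb\big[\{X_0\in B\}\Delta\{X_m\in B\}\big]\le\sum_{i=1}^m\Pb\big[\{X_{i-1}\in B\}\Delta\{X_i\in B\}\big].
\end{align*}
By the definition of $\mathcal E_B$, this is exactly the second formulation of the lemma. There is no real obstacle. The only point requiring care is that all the $X_i$ live on a common probability space, so that the events $A_i$ can be compared $\omega$ by $\omega$; this is part of the hypotheses.
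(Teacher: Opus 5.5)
Your proof is correct: the inclusion $A_0\Delta A_m\subset\bigcup_{i=1}^m(A_{i-1}\Delta A_i)$, or equivalently the pointwise bound $|\1_{A_m}-\1_{A_0}|\le\sum_{i=1}^m|\1_{A_i}-\1_{A_{i-1}}|$, followed by finite subadditivity, is the elementary triangle-type inequality the paper has in mind. The paper states this lemma without a written proof, so your proposal supplies the standard argument it implicitly relies on.
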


\noindent This ordering can be utilized to easily construct couplings of point processes, as indicated by the following result.

\begin{Lemma}\label{Propcouplmonotone}
Let $\mu,\nu$ be elements in $\mathrm{Lv}(\mathbbm{X})$, satisfying $\mu\preceq\nu$. Then there exists a coupling of two sequences $\{X_k^1;\,k\ge1\}$ and $\{X_k^2;\,k\ge1\}$ such that $X_k^1 \le X_k^2$ for all $k\ge1$, and
\begin{align*}
\rho_{\mu_i}
=
\mathrm{Law}\big(\sum_{k\ge1} \mathbbm{1}_{\{X_k^i>0\}}\delta_{X_k^i}\big),
\end{align*}
for $i=1,2$, where $\mu_1=\mu$ and $\mu_2=\nu$.
\end{Lemma}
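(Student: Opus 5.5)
The coupling will be built from the inverse tail functions of $\mu$ and $\nu$ applied to one common Poisson process. Set $\bar\mu(t):=\mu[(t,1]]$ and $\bar\nu(t):=\nu[(t,1]]$ for $t\in(0,1]$. These are non-increasing and right-continuous, and they are finite because a Lévy measure is finite on intervals bounded away from zero. Let $\Gamma_1<\Gamma_2<\cdots$ be the arrival times of a standard unit-rate Poisson process on $(0,\infty)$. For $\lambda=\mu,\nu$ define the generalized inverse
\begin{align*}
\bar\lambda^{-1}(s):=\sup\{t\in(0,1]\ ;\ \bar\lambda(t)\ge s\},
\end{align*}
with $\sup\emptyset:=0$. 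Then put $X_k^1:=\bar\mu^{-1}(\Gamma_k)$ and $X_k^2:=\bar\nu^{-1}(\Gamma_k)$.

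Monotonicity is immediate. Since $\mu\preceq\nu$ gives $\bar\mu(t)\le\bar\nu(t)$ for every $t$, we have
\begin{align*}
\{t\ ;\ \bar\mu(t)\ge s\}\subseteq\{t\ ;\ \bar\nu(t)\ge s\}
\end{align*}
for every $s$, and therefore $X_k^1\le X_k^2$ for all $k$, pathwise.

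Next we identify the laws. By the mapping theorem, the point process $\sum_k \mathbbm 1_{\{\bar\lambda^{-1}(\Gamma_k)>0\}}\delta_{\bar\lambda^{-1}(\Gamma_k)}$ is Poisson. Its intensity is the image of Lebesgue measure on $(0,\infty)$ under $\bar\lambda^{-1}$, restricted to $(0,1]$. So it suffices to check, for each $t\in(0,1]$,
\begin{align*}
\mathrm{Leb}\{s>0\ ;\ \bar\lambda^{-1}(s)>t\}=\bar\lambda(t).
\end{align*}
This comes from the Galois-type equivalence $\bar\lambda^{-1}(s)>t \iff s\le\bar\lambda(t')$ for some $t'>t$. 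By right-continuity of $\bar\lambda$ the right-hand condition is equivalent to $s<\bar\lambda(t)$ up to at most the single point $s=\bar\lambda(t)$. Hence the set has Lebesgue measure $\bar\lambda(t)$. Since the sets $(t,1]$ form a $\pi$-system generating the Borel sets of $(0,1]$, this identifies the intensity on every set bounded away from zero, hence as $\lambda$.

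Finally we handle the zero values. Points $s>\bar\lambda(0+)$, or more precisely $s$ exceeding $\lambda((0,1])$ when that total mass is finite, are sent to $0$ and are excluded by the indicator. If $\lambda$ has infinite total mass, then $\bar\lambda^{-1}(s)>0$ for all $s$, and the inverse decreases to $0$ as $s\to\infty$. In either case the mapped process is locally finite on $(0,1]$, and its law is $\rho_\lambda$.

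The main delicate step is the inversion identity when $\bar\lambda$ has jumps, that is, when $\lambda$ has atoms. It is handled by the right-continuity argument above; the rest is routine.
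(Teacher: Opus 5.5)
Your proposal is correct and follows essentially the same route as the paper: the same generalized inverses $\sup\{t\in(0,1]\,;\,\mu_i[(t,1]]\ge s\}$ evaluated at a single unit-rate Poisson sequence $(\Gamma_k)$, with $X_k^1\le X_k^2$ following from $\mu[(t,1]]\le\nu[(t,1]]$. The only difference is that you also verify the inverse-tail identity, using the mapping theorem, right-continuity of the tail and a $\pi$-system argument, and you handle the atoms and the points sent to zero; the paper just cites this step as the standard inverse-tail construction.
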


\subsection{Representation of Harmonic prime factors}\label{se:repofharmonicprimefact}

We begin by recalling a probabilistic representation that will play a central role in our analysis. Let $H_n$ be a random variable with the harmonic distribution on $\{1,\dots,n\}$. A key feature of $H_n$ is that the vector of its valuations admits a convenient probabilistic description in terms of independent random variables subject to a single global constraint. Let $\{\xi_p: p\in\mathcal{P}\}$ be a family of independent geometric random variables satisfying
\begin{align*}
\PP[\xi_p=k]=(1-p^{-1})p^{-k}, \qquad k\in\mathbb{N}_0 .
\end{align*}
Define the event
\begin{align}\label{eq:Andef}
A_n
  :=
\big\{
\prod_{p\le n} p^{\xi_p}\le n
\big\}.
\end{align}
The following statement describes the divisibility structure of $H_n$. The corresponding proof can be consulted in \cite{chen2021}.

\begin{prop}\label{p:div_H}
Let $\vec{C}(n)=(\alpha_p(H_n):p\le n)$ and $\vec{\xi}(n)=(\xi_p:p\le n)$. Then for all $n\geq  21$
\begin{align}
\PP[A_n]
  &=
\mathfrak{l}_n\prod_{p\le n}(1-p^{-1})\ge \frac12 , \label{eq:PrAnineq}
\\
\mathcal{L}(\vec{C}(n))
  &=\mathcal{L}(\vec{\xi}(n)\mid A_n).
\label{eq:conditionallaws}
\end{align}
In particular, for any additive arithmetic function $\psi$,
\begin{align}\label{eq:conditionallaw20}
\mathcal{L}(\psi(H_n))
=
\mathcal{L}\big(
\sum_{p\le n}\psi(p^{\xi_p}) \,\big|\, A_n
\big).
\end{align}
\end{prop}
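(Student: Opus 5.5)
The two identities will be checked by direct computation with the product law of the independent geometrics, and the lower bound $\PP[A_n]\ge\frac12$ will come from Mertens' theorem.

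\textbf{The conditional law \eqref{eq:conditionallaws}.} Fix a vector $(a_p\ ;\ p\le n)\in\mathbb N_0^{\pi(n)}$ and set $k=\prod_{p\le n}p^{a_p}$. By independence,
\begin{align*}
\PP[\xi_p=a_p\ \forall p\le n]
=
\prod_{p\le n}(1-p^{-1})p^{-a_p}
=
\frac{1}{k}\prod_{p\le n}(1-p^{-1}).
\end{align*}
By unique factorization, the map $(a_p)\mapsto k$ is a bijection onto the set of integers whose prime factors are all $\le n$. This set contains $\{1,\dots,n\}$, and the event $A_n$ is exactly the event that $k\le n$.

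Summing over $k=1,\dots,n$ gives
\begin{align*}
\PP[A_n]=\mathfrak l_n\prod_{p\le n}(1-p^{-1}).
\end{align*}
Dividing the pointwise identity by $\PP[A_n]$ gives, for $k\le n$,
\begin{align*}
\PP\big[\vec\xi(n)=(a_p)\mid A_n\big]=\frac{1}{\mathfrak l_n k}=\PP[H_n=k].
\end{align*}
The event $\{H_n=k\}$ is the same as $\{\vec C(n)=(a_p)\}$, so this proves \eqref{eq:conditionallaws}.

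\textbf{Additive functions \eqref{eq:conditionallaw20}.} For an additive function we have $\psi(H_n)=\sum_{p\le n}\psi(p^{\alpha_p(H_n)})$, using the convention $\psi(1)=0$. This is a measurable function of $\vec C(n)$. Applying the same function to $\vec\xi(n)$ under the conditional law gives \eqref{eq:conditionallaw20} immediately.

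\textbf{The lower bound $\PP[A_n]\ge \frac12$.} This is the only step with real content. By Mertens' third theorem, $\prod_{p\le n}(1-p^{-1})\sim e^{-\gamma}/\log n$, and $\mathfrak l_n=\log n+\gamma+O(1/n)$. Hence $\PP[A_n]\to e^{-\gamma}\approx 0.5615>\frac12$. The difficulty is the threshold $n\ge 21$, which requires explicit error terms rather than asymptotics.

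I would use the Rosser--Schoenfeld bound
\begin{align*}
\prod_{p\le x}(1-p^{-1})>\frac{e^{-\gamma}}{\log x}\Big(1-\frac{1}{2\log^2 x}\Big),
\end{align*}
valid for $x>1$ (or a comparable explicit version), together with $\mathfrak l_n>\log n+\gamma$. Together these give
\begin{align*}
\PP[A_n]>e^{-\gamma}\Big(1+\frac{\gamma}{\log n}\Big)\Big(1-\frac{1}{2\log^2 n}\Big).
\end{align*}
The right-hand side exceeds $\frac12$ once $\log n$ is moderately large. Any finitely many remaining values of $n$ at or above 21 would be checked numerically. Note that $\mathfrak l_n\prod_{p\le n}(1-p^{-1})$ only changes at primes between consecutive checks, which keeps the finite verification short.

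For small $n$ the product can dip close to $\frac12$, and this is presumably why the cutoff is $21$. I expect that confirming the explicit inequality in this range is the main, though purely computational, obstacle.
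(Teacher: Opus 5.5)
Your proposal is correct and is essentially the standard argument that the paper defers to \cite{chen2021}: the product formula $\PP[\vec\xi(n)=(a_p)]=k^{-1}\prod_{p\le n}(1-p^{-1})$ together with unique factorization yields both $\PP[A_n]$ and the conditional law, and an explicit form of Mertens' third theorem gives $\PP[A_n]\ge\frac12$. One minor point: no numerical verification is needed, since your Rosser--Schoenfeld bound already exceeds $e^{-\gamma}>\frac12$ for every $n\ge 21$; the cutoff $21$ is presumably an artifact of a cruder estimate such as $\PP[A_n]\ge e^{-\gamma}(1-\log^{-2}n)$ (which is $\ge\frac12$ exactly from $n=21$ on), rather than a sign that $\PP[A_n]$ actually dips toward $\frac12$.
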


\noindent This representation reduces the analysis of harmonic prime factors to the study of independent geometric variables under the single conditioning event \(A_n\).

\subsection{Dickman approximations and weighted random sums}

The Dickman function $\rho$ plays a central role in probabilistic number theory, particularly in the study of smooth numbers and large prime factors. It is defined as the unique solution to the delay differential equation
\begin{align}\label{eq:dickman_def}
u\rho'(u)=-\rho(u-1),
\end{align}
for $u>1$, together with the initial condition $\rho(u)=1$ for $0\le u\le1$. This function is continuous on $\mathbb{R}_{+}$ and arises naturally in asymptotic problems involving multiplicative structures.\\

\noindent A probabilistic viewpoint is obtained by observing that $x\mapsto e^{-\gamma_{\mathrm E}}\rho(x)$ defines a probability density on $\mathbb{R}_{+}$, where $\gamma_{\mathrm E}$ denotes the Euler-Mascheroni constant. The associated probability law is known as the Dickman distribution. Further discussion of this interpretation can be found in \cite{MR3968515}; classical analytic properties are discussed for instance in \cite{MR3363366}.\\

\noindent Dickman-type limits also appear in approximations of weighted sums of independent random variables. Such approximations have been systematically studied using Stein's method in \cite{BattSchulte}, where explicit bounds in Kolmogorov distance are obtained. In the setting considered here, the random quantities that arise lead naturally to normalized sums of the form
\begin{align*}
S_m
=
\frac{1}{\log m}
\sum_{p\le m}\log(p)\xi_p.
\end{align*}
A quantitative analysis of these sums was obtained in \cite{jaramilloyang2024}. The following estimate will serve as one of the main probabilistic ingredients in our arguments.\\

\noindent For a locally integrable function $f:\mathbb{R}_+\to\mathbb{R}$ we denote by $\mathcal{I}[f]$ the integral operator
\begin{align*}
\mathcal{I}[f](x)
  :=\int_{0}^{x}f(s)\,ds .
\end{align*}

\begin{lemma}\label{lem:kolmmain}
There exists a universal constant $C>0$ such that, for all $z>0$,
\begin{align*}
|\Pb[S_{m}\leq z]-e^{-\gamma_{\mathrm E}}\mathcal{I}[\rho](z)|
\leq
\frac{C(1+1/z^2)}{\log m}.
\end{align*}
\end{lemma}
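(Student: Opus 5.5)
The plan is to use Stein's method for the Dickman distribution, following the approach of \cite{BattSchulte}. The law with density $e^{-\gamma_{\mathrm E}}\rho$ is the unique law on $\mathbb{R}_+$ for which a random variable $D$ satisfies
\begin{align*}
\mathbb{E}[D f(D)] = \mathbb{E}\Big[\int_0^1 f(D+u)\,du\Big]
\end{align*}
for all bounded Lipschitz $f$. This is the integrated form of the delay equation \eqref{eq:dickman_def}. For the test function $h_z=\mathbbm{1}_{[0,z]}$, let $f_z$ solve the Stein equation
\begin{align*}
w f(w)-\int_0^1 f(w+u)\,du = h_z(w)-e^{-\gamma_{\mathrm E}}\mathcal{I}[\rho](z).
\end{align*}
Then the quantity to bound becomes $\mathbb{E}[S_m f_z(S_m)]-\mathbb{E}\big[\int_0^1 f_z(S_m+u)\,du\big]$. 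I will first establish bounds on $f_z$: $\|f_z\|_\infty$ and the total variation of $f_z$ are controlled by a constant times $(1+1/z)$. The blow-up comes from the solution near $w=0$, where the equation degenerates. A second factor $1/z$ will appear once these bounds are combined with the error terms below, which produces the weight $1+1/z^2$.

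Next I will compute $\mathbb{E}[S_m f(S_m)]$ exactly using the structure of the geometric variables. Write $a_p=\log p/\log m$ and $S_m=a_p\xi_p+S_m^{(p)}$, where $S_m^{(p)}$ is independent of $\xi_p$. For $\xi_p$ geometric with parameter $q=1/p$, we have $\mathbb{E}[\xi_p g(\xi_p)]=\sum_{k\ge1}q^k\,\mathbb{E}[g(\xi_p+k)]$. Hence
\begin{align*}
\mathbb{E}[S_m f(S_m)]=\sum_{p\le m}a_p\sum_{k\ge1}p^{-k}\,\mathbb{E}\big[f(S_m+k a_p)\big].
\end{align*}
The terms with $k\ge2$ contribute $O\big(\sum_p \log p/(p^2\log m)\big)\|f\|_\infty=O(\|f\|_\infty/\log m)$. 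The main term is $\mathbb{E}\big[\int f(S_m+u)\,\nu_m(du)\big]$, where $\nu_m=\sum_{p\le m}\frac{\log p}{p\log m}\delta_{a_p}$. By Mertens' theorem in the strong form
\begin{align*}
\sum_{p\le x}\frac{\log p}{p}=\log x+O(1),
\end{align*}
the distribution function of $\nu_m$ differs from that of Lebesgue measure on $[0,1]$ by $O(1/\log m)$, uniformly. Integrating by parts against $u\mapsto f(S_m+u)$ then gives
\begin{align*}
\Big|\int f(S_m+u)\,\nu_m(du)-\int_0^1 f(S_m+u)\,du\Big|\le \frac{C}{\log m}\big(\|f\|_\infty+\mathrm{TV}(f)\big).
\end{align*}

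The main obstacle is that $f_z$ comes from a discontinuous test function, so $\mathrm{TV}(f_z)$ and the integration by parts must be handled carefully. I would first try to bound the variation of $f_z$ directly from the explicit solution formula of the Stein equation. If that yields insufficient control, the fallback is to smooth $h_z$ at a scale $\varepsilon$. The smoothing cost is at most $\varepsilon\, e^{-\gamma_{\mathrm E}}\sup\rho\le \varepsilon$, since the Dickman density is bounded by $e^{-\gamma_{\mathrm E}}$. A derivative bound for the smoothed solution, of order $(1+1/z)/\varepsilon$, is then paired against the $O(1/\log m)$ Mertens error, and one optimizes over $\varepsilon$. The cost of this fallback is that it typically gives only $(\log m)^{-1/2}$, so to reach the full rate $1/\log m$ I expect to need the unsmoothed variation bound together with an anti-concentration estimate for $S_m$ at scale $1/\log m$. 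That estimate follows from the fact that $S_m$ lives on the lattice-like set $\{\log k/\log m\}$, whose gaps are of order $1/(k\log m)$, combined with a crude bound on the local density of $S_m$.
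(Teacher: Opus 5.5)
The paper does not prove this lemma; it quotes it from \cite{jaramilloyang2024}, in the Stein-method framework of \cite{BattSchulte}. Your reduction is sound. The geometric size-bias identity, the $O(\|f\|_\infty/\log m)$ bound for the terms with $k\ge2$, and the comparison of $\nu_m$ with Lebesgue measure (via Mertens and integration by parts) together give
\begin{align*}
\big|\Pb[S_m\le z]-e^{-\gamma_{\mathrm E}}\mathcal I[\rho](z)\big|\le \frac{C}{\log m}\big(\|f_z\|_\infty+\mathrm{TV}(f_z)\big).
\end{align*}
No further ingredient is needed. The genuine gap is that the only non-routine estimate, $\|f_z\|_\infty+\mathrm{TV}(f_z)\le C(1+1/z)$, is asserted with no construction behind it. The Stein equation here is of advanced type: it determines $f(w)$ from the values of $f$ on $[w,w+1]$. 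So there is no ready-made ``explicit solution formula'' to read bounds from. You also do not check that the equation holds pointwise on the support of the discrete variable $S_m$, in particular at $w=0$ (probability of order $1/\log m$) and at $w=z$. The one argument you actually carry out, the smoothing fallback, gives $(\log m)^{-1/2}$ and does not prove the lemma. Two of your auxiliary steps are also unnecessary. Anti-concentration is never needed, since the operator is nonlocal and the discontinuity of $f_z$ is paid only through $\mathrm{TV}(f_z)$ against a uniform $O(1/\log m)$ discrepancy of distribution functions. No ``second factor $1/z$'' arises either: you would get $C(1+1/z)/\log m$, which implies the stated bound since $1/z\le 1+1/z^2$.

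The missing idea is a representation of $f_z$. The Dickman law is invariant for the shot-noise process with generator $\mathcal L g(x)=-xg'(x)+g(x+1)-g(x)$. Started at $x$, this process is $X_t=e^{-t}x+Y_t$, where $Y_t=\sum_{\tau_i\le t}e^{-(t-\tau_i)}$ and $(\tau_i)$ is a unit-rate Poisson process. Put $\tilde h_z=h_z-e^{-\gamma_{\mathrm E}}\mathcal I[\rho](z)$ and $g=-\int_0^\infty\E[\tilde h_z(X_t)]\,dt$. Then $f_z=-g'$ solves your equation. Since $\Pb[Y_t=0]=e^{-t}$ and the rest of the law of $Y_t$ has a density $q_t$,
\begin{align*}
f_z(x)=-\frac{z}{x^2}\mathbbm 1_{\{x>z\}}-\int_0^\infty e^{-t}q_t(z-e^{-t}x)\,dt .
\end{align*}
Size-biasing $Y_t$ gives $y\,q_t(y)=\Pb[y-1\le Y_t\le y-e^{-t}]$. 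Inducting in steps of $e^{-t}$ yields $\|q_t\|_\infty\le1$, and a direct estimate yields $\mathrm{TV}(q_t)\le C(1+t)$. Hence $\|f_z\|_\infty\le 1+1/z$ and $\mathrm{TV}(f_z)\le 2/z+C$. This also shows that the $1/z$ comes from the atom of $Y_t$ at $0$, i.e.\ from $x$ near $z$, not from $w$ near $0$ as you suggest. Insert this, using the version that is left-continuous at $x=z$ so that the equation holds at every atom of $S_m$ (including $w=0$, where it reduces to $g(1)-g(0)=\tilde h_z(0)$). Your argument then closes, with the better weight $1+1/z$.
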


\subsection{Mertens' estimates}

We recall several classical estimates concerning partial sums and products over the prime numbers. These results originate in the work of Franz Mertens and are commonly known as the first, second, and third Mertens formulas. They provide asymptotic descriptions of sums involving the functions $\log(p)/p$ and $1/p$, as well as products of the form $\prod_{p\le m}(1-1/p)$. Let
\begin{align*}
\mathcal P_n
:=
\{p\in\mathcal P\ ;\ p\le n\}.
\end{align*}
As a consequence of the first Mertens formula, there exists a constant \(C>0\) such that, for all \(n\ge2\),
\begin{align}\label{Firstmertens}
\left|
\sum_{p\in\mathcal{P}_{n}}\frac{\log(p)}{p}
-\log(n)
\right|
  \le C.
\end{align}

The second formula concerns the harmonic sum over the primes. It asserts that
\begin{align}\label{Secondmertens}
\left|\sum_{p\in\mathcal{P}_n}\frac{1}{p}-\log\log(n)-c_1\right|
  \le \frac{5}{\log(n)},
\end{align}
where $c_1$ denotes an absolute constant. These estimates are standard in analytic number theory and will be used repeatedly in the sequel. Detailed proofs can be found in \cite{MR3363366}.

\subsection{Dirichlet processes and Poisson-Dirichlet distributions}

Poisson-Dirichlet distributions provide a natural framework for describing the ranked atom sizes of certain random discrete probability measures; see, for instance, \cite{kingman1975random,arratia2003}. We begin by recalling the Dirichlet distribution. Let $\alpha_1,\dots,\alpha_k>0$. A random vector $(X_1,\dots,X_k)$ taking values in the simplex
\begin{align*}
\big\{(x_1,\dots,x_k)\in[0,1]^k \ ;\ \sum_{i=1}^k x_i =1\big\}
\end{align*}
is said to have a Dirichlet distribution with parameters $(\alpha_1,\dots,\alpha_k)$ if its density with respect to Lebesgue measure on the simplex is proportional to
\begin{align*}
x_1^{\alpha_1-1}\cdots x_k^{\alpha_k-1}.
\end{align*}

Let $(E,\mathcal{E})$ be a measurable space, let $\pi$ be a probability measure on $E$, and let $\alpha>0$. A Dirichlet process with parameters $(\alpha,\pi)$ is a random probability measure $Z$ on $E$ satisfying the following property: for every finite measurable partition $(A_1,\dots,A_k)$ of $E$, the vector
\begin{align*}
\big(Z[A_1],\dots,Z[A_k]\big)
\end{align*}
has a Dirichlet distribution with parameters
\begin{align*}
(\alpha \pi[A_1],\dots,\alpha \pi[A_k]),
\end{align*}
with the usual degenerate interpretation when some $\pi[A_i]=0$. We write
\begin{align*}
Z \sim DP(\alpha,\pi).
\end{align*}
This construction was introduced by Ferguson \cite{ferguson1973}.\\

\noindent A fundamental representation of Dirichlet processes is obtained through random atomic measures. Let $(W_i)_{i\ge1}$ be independent random variables with
\begin{align*}
W_i\sim \mathrm{Beta}(1,\alpha),
\end{align*}
and define the stick-breaking weights
\begin{align*}
P_1=W_1,\qquad
P_i=W_i\prod_{j<i}(1-W_j),
\quad i\ge2.
\end{align*}
Let $(\xi_i)_{i\ge1}$ be an i.i.d.\ sequence of $E$-valued random variables with distribution $\pi$, independent of $(P_i)_{i\ge1}$. Then
\begin{align*}
Z := \sum_{i\ge1} P_i \delta_{\xi_i}
\end{align*}
is a Dirichlet process with parameters $(\alpha,\pi)$; see \cite{sethuraman1994,pitman2006}. The sequence $(P_i)$ is in size-biased order and has the GEM$(\alpha)$ distribution. Its decreasing rearrangement has the Poisson-Dirichlet distribution, denoted here by $PD(\alpha)$. Thus, in this representation, the random measure $Z$ is almost surely purely atomic: the weights describe the atom sizes, while the variables $(\xi_i)$ determine their locations in $E$. Quantitative approximations for these distributions have been studied using Stein's method; see Gan and Ross \cite{gan2021stein}.\\

\noindent For the purposes of this paper, it is also useful to recall the equivalent Poissonian description of the Poisson-Dirichlet law. The distribution $PD(1)$ may be obtained by taking a Poisson point process on $(0,1]$ with intensity $x^{-1}\,dx$, conditioning on its total mass being equal to one, and then arranging its atoms in decreasing order. This description is the one that will be relevant for the attenuated Poisson-Dirichlet law appearing in the main result.\\

\noindent We now record the Poissonian interpretation of the attenuation operation introduced in Section~\ref{sec:mainresults}. This point of view is useful because the limiting object in the harmonic model is naturally obtained from a scale-invariant Poisson point process, but with a constraint on its total mass different from the one appearing in the usual Poisson-Dirichlet construction. Let $\eta$ be a Poisson point process on $(0,1)$ with intensity $x^{-1}\,dx$.\\

\noindent Let $\mathfrak q(\eta)$ denote the decreasing sequence of atoms of $\eta$, counted with multiplicity and completed with zeros when necessary. The following lemma shows that this conditional law is precisely the uniformly attenuated Poisson-Dirichlet law.

\begin{lemma}\label{lem:attenuated-PD-poisson-cloud}
Let $\eta$ be a Poisson point process on $(0,1)$ with intensity $x^{-1}\,dx$, and let
\begin{align*}
T(\eta):=\int_{(0,1)}x\,\eta(dx).
\end{align*}
Then
\begin{align*}
\mathcal L\big(\mathfrak q(\eta)\mid T(\eta)\le 1\big)
=
\mathfrak A[\theta_{\mathcal S}],
\end{align*}
where $\theta_{\mathcal S}$ denotes the law on $\mathcal S$ obtained by arranging a $PD(1)$ mass partition in decreasing order. Equivalently,
\begin{align*}
\mathfrak q(\eta)\mid \{T(\eta)\le1\}
\overset{Law}{=}
U(P_1,P_2,\dots),
\end{align*}
where $(P_1,P_2,\dots)$ has distribution $PD(1)$, $U$ is uniformly distributed on $(0,1)$, and $U$ is independent of $(P_1,P_2,\dots)$.
\end{lemma}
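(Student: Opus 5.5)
The plan is to use the classical Poisson–Dirichlet facts for the Poisson cloud with intensity $x^{-1}dx$: the total mass $T=T(\eta)$ has density $e^{-\gamma_{\mathrm E}}\rho(t)$ (the Dickman law), and, conditionally on $T=t$, the ranked atoms satisfy
\begin{align*}
\mathcal L\big(\mathfrak q(\eta)\mid T=t\big)=\mathcal L\big(t\,(P_1,P_2,\dots)\big),
\end{align*}
with $(P_1,P_2,\dots)\sim PD(1)$. From these, the conditional law given $\{T\le 1\}$ is a mixture over $t\in(0,1]$ of $t\cdot PD(1)$, weighted by the law of $T$ restricted to $\{T\le1\}$. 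Since $\rho\equiv1$ on $[0,1]$, that weight is uniform. This is exactly $U(P_1,P_2,\dots)$ with $U$ uniform and independent, i.e.\ $\mathfrak A[\theta_{\mathcal S}]$.

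I would justify the two facts in the following order.

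\emph{Step 1 (law of $T$).} By Campbell's formula the Laplace transform of $T$ is
\begin{align*}
\E[e^{-sT}]=\exp\Big(-\int_0^1(1-e^{-sx})x^{-1}dx\Big),
\end{align*}
which is the Laplace transform of the Dickman distribution. Alternatively, one can derive the density $g$ directly. The Palm/size-biasing identity $t\,g(t)=\int_0^{1\wedge t}g(t-x)\,dx$ holds. On $(0,1)$ its right-hand side is $\int_0^t g$, so $(tg)'=g$ and hence $g'=0$; thus $g$ is constant on $(0,1)$, and that constant is $g(0^+)=e^{-\gamma_{\mathrm E}}$, fixed by normalization. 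All that is actually needed is that $g$ is constant on $(0,1]$, with $\Pb[T\le1]=e^{-\gamma_{\mathrm E}}$.

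\emph{Step 2 (scaling of the conditional law).} I would use Kingman's construction. Let $\tilde\eta$ be the cloud on $(0,\infty)$ with intensity $x^{-1}e^{-x}dx$, so that $\tilde T$ is Gamma$(1)$, and $\tilde T$ is independent of the normalized ranked atoms $\mathfrak q(\tilde\eta)/\tilde T\sim PD(1)$. Tilting by $e^{x}$ and truncating at $1$ relates $\eta$ to $\tilde\eta$ restricted to atoms below $1$. On the event $\{T\le1\}$ every atom automatically lies below $1$, and the tilt contributes the density factor $e^{\tilde T}$, which depends only on the total mass. Hence for any bounded $F$,
\begin{align*}
\E\big[F(\mathfrak q(\eta))\1_{\{T\le1\}}\big]=c\,\E\big[F(\mathfrak q(\tilde\eta))\,e^{\tilde T}\1_{\{\tilde T\le1\}}\big]
\end{align*}
for a constant $c$. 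Independence of $\tilde T$ and $\mathfrak q(\tilde\eta)/\tilde T$ then gives conditional law $t\cdot PD(1)$ given $T=t\le1$. The weight $e^{t}\cdot e^{-t}=1$ on $(0,1]$ reconfirms the uniform mixing density found in Step 1.

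\emph{Step 3 (conclusion).} Combining the two steps, $\mathcal L(\mathfrak q(\eta)\mid T\le1)=\int_0^1\mathcal L(t\mathbf P)\,dt=\mathfrak A[\theta_{\mathcal S}]$. The main obstacle is rigor in Step 2, specifically the change-of-measure identity between Poisson processes with intensities $x^{-1}\1_{(0,1)}$ and $x^{-1}e^{-x}$. Both intensities have infinite mass near $0$, so the density must be obtained through truncation at level $\varepsilon\to0$; the normalizing constants then cancel because $\int_0^1(1-e^{-x})x^{-1}dx<\infty$.
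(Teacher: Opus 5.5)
Your proof is correct, but the key step uses a different mechanism from the paper's. The paper also begins with the Dickman density of $T(\eta)$ to get that $T$ is uniform on $\{T\le1\}$. For the shape of the configuration, however, it invokes the characterization of $PD(1)$ as the scale-invariant cloud conditioned to have total mass one, and then argues by scaling: given $T=t\le1$ all atoms lie in $(0,t]$, the intensity $x^{-1}dx$ is invariant under $x\mapsto x/t$, so $\mathfrak q(\eta)/t$ has law $\theta_{\mathcal S}$ for every $t$.

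You instead use Kingman's gamma representation (independence of $\tilde T$ and $\mathfrak q(\tilde\eta)/\tilde T$) together with the Poisson change of measure $\E[F(\mathfrak q(\eta))\mathbf 1_{\{T\le1\}}]=c\,\E[F(\mathfrak q(\tilde\eta))e^{\tilde T}\mathbf 1_{\{\tilde T\le1\}}]$. That identity follows directly from the explicit density $\exp(\int(1-h)\,d\nu)\prod_{x}h(x)$, with $h(x)=e^{x}\mathbf 1_{(0,1)}(x)$ and $\nu(dx)=x^{-1}e^{-x}dx$. The formula is valid because $\int|h-1|\,d\nu<\infty$, so the $\varepsilon$-truncation you flag as the main obstacle is not really needed.

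Note also that your Step 2 already proves the whole lemma, so Step 1 is redundant. Taking $F\equiv1$ gives $c=\Pb[T\le1]$, which equals $e^{-\gamma_{\mathrm E}}$ by the classical integral formula for $\gamma_{\mathrm E}$. The mixing weight $e^{t}e^{-t}=1$ on $(0,1]$ is then exactly the uniform law.

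Your route gives a self-contained argument that never conditions on the null events $\{T=t\}$. It also shows explicitly why the shape is independent of the mass: the tilt depends only on $\tilde T$. The paper's route is shorter and makes the scale invariance transparent. The price is reliance on a cited characterization of $PD(1)$, itself usually proved via the same gamma tilt, and on an informal manipulation of regular conditional distributions.
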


\begin{proof}
For the scale-invariant Poisson point process with intensity $x^{-1}\,dx$, the total mass
\begin{align*}
T(\eta)=\int_{(0,1)}x\,\eta(dx)
\end{align*}
has density
\begin{align*}
t\mapsto e^{-\gamma_{\mathrm E}}\rho(t),
\end{align*}
where $\gamma_{\mathrm E}$ is the Euler-Mascheroni constant and $\rho$ is the Dickman function. Since $\rho(t)=1$ for $0\le t\le1$, the conditional law of $T(\eta)$ given $\{T(\eta)\le1\}$ is the uniform law on $(0,1)$.\\

\noindent We next use the Poissonian construction of the Poisson-Dirichlet law. The law $PD(1)$ is obtained by taking a scale-invariant Poisson point process on $(0,1]$ with intensity $x^{-1}\,dx$, conditioning on the event that the total mass of the configuration is equal to one, and arranging its atoms in decreasing order; see \cite{MR2195574}.\\

\noindent Fix $0<t\le1$, and consider a regular conditional distribution given $T(\eta)=t$. Since the total mass is $t$, all atoms of $\eta$ belong to $(0,t]$. After the change of variables $x=ty$, the normalized configuration has total mass equal to one. Moreover, the intensity $x^{-1}\,dx$ is invariant under this scaling. Hence the conditional law of the normalized ranked configuration is the same as the law obtained by conditioning a scale-invariant Poisson point process to have total mass one. Therefore
\begin{align*}
\mathcal L\big(\mathfrak q(\eta)/t\,\big|\,T(\eta)=t\big)
=
\theta_{\mathcal S},
\end{align*}
for every $0<t\le1$. In particular, this conditional law does not depend on $t$. It follows that, under the conditioning $\{T(\eta)\le1\}$, the random sequence $ \mathfrak q(\eta)/T(\eta)$
has distribution $\theta_{\mathcal S}$ and is independent of $T(\eta)$.\\

\noindent Since, under the same conditioning, $T(\eta)$ is uniformly distributed on $(0,1)$, we obtain
\begin{align*}
\mathfrak q(\eta)\mid \{T(\eta)\le1\}
\overset{d}{=}
U(P_1,P_2,\dots),
\end{align*}
where $U\sim \mathrm{Unif}(0,1)$ is independent of $(P_1,P_2,\dots)\sim PD(1)$. By the definition of the attenuation operator $\mathfrak A$, this is precisely $\mathfrak A[\theta_{\mathcal S}].$ The proof is complete.
\end{proof}

\section{Proof of Theorem \ref{theorem:main}}\label{sec:8}

\noindent We begin by fixing a notation for conditioning. Let $(E,\mathcal E)$ be a measurable space, let $\gamma$ be an element of $\mathcal M_1(E)$, and let $A\in\mathcal E$ be such that $\gamma[A]>0$. We denote by $\mathfrak C_A[\gamma]$ the probability measure on \(E\) defined by
\begin{align*}
\mathfrak C_A[\gamma][B]
:=
\frac{\gamma[B\cap A]}{\gamma[A]},
\end{align*}
for $B\in\mathcal E$. When \(E=\mathbf N(\mathbbm X)\), we shall use the notation
\begin{align}\label{eq:Aodef}
A^o
:=
\left\{
\eta\in\mathbf N(\mathbbm X)
\ ;\
\int_{\mathbbm X}x\,\eta(dx)\le1
\right\}.
\end{align}

\noindent Let the notation introduced in Section \ref{se:repofharmonicprimefact} prevail. In particular, let $(\xi_p\ ;\ p\in\mathcal P)$ and $A_n$ be as in Proposition \ref{p:div_H}. Define the random integer 
\begin{align*}
I_n := \prod_{p\le n} p^{\xi_p}.
\end{align*}
Then, by Proposition \ref{p:div_H},
\begin{align*}
\mathcal{L}(H_n)=\mathcal{L}(I_n\mid A_n).
\end{align*}
We introduce two point processes. The first one records the full geometric multiplicities:
\begin{align*}
\mathcal X_n^{\mathrm{geo}}
:=
\sum_{p\in\mathcal P_n}
\xi_p\,
\delta_{\frac{\log p}{\log n}}.
\end{align*}
The second one records only the distinct prime divisors:
\begin{align*}
\mathcal X_n^{0}
:=
\sum_{p\in\mathcal P_n}
\mathbbm 1_{\{\xi_p\ge1\}}\,
\delta_{\frac{\log p}{\log n}}.
\end{align*}
Notice that
\begin{align*}
A_n
=
\left\{
\frac{1}{\log n}
\sum_{p\le n}\xi_p\log p\le1
\right\}
=
\{\mathcal X_n^{\mathrm{geo}}\in A^o\}.
\end{align*}
On the other hand, using the fact that 
\begin{align*}
\mathfrak{P}(I_n)=\{p\le n:\xi_p\ge1\},
\end{align*}
we have
\begin{align}\label{Tpushfwardexpression}
T^n_{\#}\big[\iota\circ \mathfrak{P}(I_n)\big]
=
\sum_{p\in\mathcal{P}_n}\mathbf{1}_{\{\xi_p\ge1\}}\,
\delta_{\frac{\log p}{\log n}}
=
\mathcal X_n^0.	
\end{align}
Consequently,
\begin{align}\label{eq:trueconditionedlaw}
\mathcal{L}\!\left(T^n_{\#}\big[\iota\circ \mathfrak{P}(H_n)\big]\right)
=
\mathcal{L}\!\left(\mathcal X_n^0\,\middle|\, \mathcal X_n^{\mathrm{geo}}\in A^o\right).
\end{align}

\noindent Our strategy exploits the representation recalled in Section~\ref{se:repofharmonicprimefact}: the random object of interest can be written as a functional of a family of independent random variables, conditioned on a single global constraint. We proceed in two steps. First, we establish quantitative approximation results at the unconditioned level. These approximations are obtained by constructing a sequence of $\mathbf N(\mathbbm X)$-valued random variables
\begin{align*}
\mathcal X_n^0,\mathcal X_n^1,\mathcal X_n^2,\mathcal X_n^3
\end{align*}
on a common probability space, where $\mathcal X_n^0$ is given by \eqref{Tpushfwardexpression} and $\mathcal X_n^3$ is a Poisson point process on $(0,1]$ with intensity \(x^{-1}\,dx\). We shall prove that, for each \(i=0,1,2\),
\begin{align}\label{eq:chain-target-estimate}
\mathcal E_{A^o}[\mathcal X_n^i,\mathcal X_n^{i+1}]
+
\E\big[\mathfrak d(\mathcal X_n^i,\mathcal X_n^{i+1})\big]
\le
C\,\frac{(\log\log n)^{\mathrm e-\frac32}}{\log n},
\end{align}
up to harmless changes in the value of the constant \(C\). In addition, we need one extra discrepancy estimate at the beginning of the chain, because the true conditioning event is encoded by \(\mathcal X_n^{\mathrm{geo}}\), whereas the point process whose conditional law we study is \(\mathcal X_n^0\). This is the content of the following lemma.

\begin{lemma}\label{lem:true-conditioning-vs-distinct}
There exists a constant \(C>0\) such that, for all \(n\) sufficiently large,
\begin{align*}
\mathcal E_{A^o}[\mathcal X_n^{\mathrm{geo}},\mathcal X_n^0]
\le
C\frac{\log\log n}{\log n}.
\end{align*}
In particular,
\begin{align*}
\mathcal E_{A^o}[\mathcal X_n^{\mathrm{geo}},\mathcal X_n^0]
\le
C\frac{(\log\log n)^{\mathrm e-\frac32}}{\log n}.
\end{align*}
\end{lemma}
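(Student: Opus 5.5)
The plan is to reduce the symmetric difference to a thin-shell event for the total mass, and then use the Kolmogorov estimate of Lemma~\ref{lem:kolmmain} together with an exponential tail bound for the mass coming from repeated prime factors. Write $T^{\mathrm{geo}}:=\int x\,\mathcal X_n^{\mathrm{geo}}(dx)=S_n$ and $T^0:=\int x\,\mathcal X_n^0(dx)$. Since $\mathbbm 1_{\{\xi_p\ge1\}}\le\xi_p$, we have $T^0\le T^{\mathrm{geo}}$, so
\begin{align*}
\{\mathcal X_n^{\mathrm{geo}}\in A^o\}\,\Delta\,\{\mathcal X_n^0\in A^o\}
=\{T^0\le 1<T^{\mathrm{geo}}\}
\subseteq\{1<T^{\mathrm{geo}}\le 1+D\},
\end{align*}
where
\begin{align*}
D:=T^{\mathrm{geo}}-T^0=\frac{1}{\log n}\sum_{p\le n}(\xi_p-1)_+\log p\ \ge 0 .
\end{align*}

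Next I fix a threshold $\delta=y/\log n$ with $y=2\log\log n$ and split according to whether $D\le\delta$. On $\{D\le\delta\}$, the event is contained in $\{S_n\in(1,1+\delta]\}$. Lemma~\ref{lem:kolmmain}, applied at $z=1$ and $z=1+\delta$ (where $1+1/z^2\le2$), combined with the bound $e^{-\gamma_{\mathrm E}}\rho\le1$ on the Dickman density, gives
\begin{align*}
\Pb[S_n\in(1,1+\delta]]\le\delta+\frac{4C}{\log n}
=O\Big(\frac{\log\log n}{\log n}\Big).
\end{align*}

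For the other piece I will use an exponential moment instead of Markov's inequality. Markov's inequality only yields $\Pb[D>\delta]\le C/(\delta\log n)$, and balancing this against $\delta$ loses a square root, which is the main obstacle. For $\theta=1/2$ and independent $\xi_p$,
\begin{align*}
\E\Big[e^{\theta\sum_p(\xi_p-1)_+\log p}\Big]
=\prod_p\E\big[p^{\theta(\xi_p-1)_+}\big].
\end{align*}
A direct geometric computation gives $\E[p^{\theta(\xi_p-1)_+}]\le 1+C p^{\theta-2}$. Since $\sum_p p^{\theta-2}<\infty$, the product is bounded by a constant $K$ that does not depend on $n$. Chernoff's bound then gives
\begin{align*}
\Pb[D>\delta]=\Pb\Big[\sum_p(\xi_p-1)_+\log p>y\Big]\le K e^{-y/2}=\frac{K}{\log n}.
\end{align*}

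Adding the two pieces yields $\mathcal E_{A^o}[\mathcal X_n^{\mathrm{geo}},\mathcal X_n^0]\le C\log\log n/\log n$. The second inequality of the lemma then follows immediately, because $\mathrm e-\tfrac32>1$ gives $\log\log n\le(\log\log n)^{\mathrm e-\frac32}$ for large $n$. The only delicate point is that the moment-generating-function bound must be uniform in $n$, and this holds because the product over primes converges absolutely for any $\theta<1$. The Kolmogorov step is harmless here, since we only evaluate it at points $z\ge1$, away from the $1/z^2$ singularity.
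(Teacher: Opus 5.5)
Your proposal is correct and follows the paper's proof essentially step for step. Both arguments reduce the symmetric difference to $\{S_n^0\le 1<S_n^{\mathrm{geo}}\}$ and split it into a tail event for the repeated-prime mass $R_n=D$, controlled by exponential Chebyshev with an $n$-uniform moment bound over primes, and a thin-shell event controlled by Lemma~\ref{lem:kolmmain} with a window of order $\log\log n/\log n$. Your concrete choice $\theta=1/2$, $y=2\log\log n$ is just a specific instance of the paper's choice of $s$ and $K$.
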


\begin{proof}
Set
\begin{align*}
S_n^{\mathrm{geo}}
:=
\int_{\mathbbm X}x\,\mathcal X_n^{\mathrm{geo}}(dx)
=
\frac{1}{\log n}
\sum_{p\in\mathcal P_n}\xi_p\log p
\end{align*}
and
\begin{align*}
S_n^0
:=
\int_{\mathbbm X}x\,\mathcal X_n^0(dx)
=
\frac{1}{\log n}
\sum_{p\in\mathcal P_n}
\mathbbm 1_{\{\xi_p\ge1\}}\log p.
\end{align*}
Since \(S_n^0\le S_n^{\mathrm{geo}}\), we have
\begin{align*}
\mathcal E_{A^o}[\mathcal X_n^{\mathrm{geo}},\mathcal X_n^0]
=
\Pb[S_n^0\le1<S_n^{\mathrm{geo}}].
\end{align*}
Define
\begin{align*}
R_n
:=
S_n^{\mathrm{geo}}-S_n^0
=
\frac{1}{\log n}
\sum_{p\in\mathcal P_n}
\big(\xi_p-\mathbbm 1_{\{\xi_p\ge1\}}\big)\log p.
\end{align*}
For every \(\tau>0\),
\begin{align*}
\{S_n^0\le1<S_n^{\mathrm{geo}}\}
\subseteq
\{R_n>\tau\}
\cup
\{1<S_n^{\mathrm{geo}}\le1+\tau\}.
\end{align*}
Hence
\begin{align}\label{eq:initial-discrepancy-split}
\mathcal E_{A^o}[\mathcal X_n^{\mathrm{geo}},\mathcal X_n^0]
\le
\Pb[R_n>\tau]
+
\Pb[1<S_n^{\mathrm{geo}}\le1+\tau].
\end{align}

\noindent We first control \(R_n\). Let
\begin{align*}
Y_p
:=
\xi_p-\mathbbm 1_{\{\xi_p\ge1\}}.
\end{align*}
Then \(Y_p=0\) when \(\xi_p=0\) or \(\xi_p=1\), while \(Y_p=k-1\) when \(\xi_p=k\ge2\). Fix \(0<s<1\). Since
\begin{align*}
\PP[\xi_p=k]=(1-p^{-1})p^{-k},
\end{align*}
and \(Y_p=0\) unless \(\xi_p\ge2\), there exists a constant \(C_s>0\), depending only on \(s\), such that
\begin{align*}
\E[p^{sY_p}]
\le
\exp\big\{ C_s p^{s-2}\big\}.
\end{align*}
By exponential Chebyshev and independence,
\begin{align*}
\Pb[R_n>\tau]
&\le
\exp\{-s\tau\log n\}
\prod_{p\in\mathcal P_n}
\E[p^{sY_p}]
\le
\exp\{-s\tau\log n\}
\exp\left\{
C_s\sum_{p\in\mathcal P} p^{s-2}
\right\}.
\end{align*}
Thus, after increasing \(C_s\) if necessary,
\begin{align}\label{eq:Rn-tail-geo-zero}
\Pb[R_n>\tau]
\le
C_s e^{-s\tau\log n}.
\end{align}

\noindent We now control the concentration term. By Lemma~\ref{lem:kolmmain} and the boundedness of \(\rho\) on compact intervals, there exists a universal constant \(C>0\) such that
\begin{align}\label{eq:Sgeo-small-window}
\Pb[1<S_n^{\mathrm{geo}}\le1+\tau]
\le
\frac{C}{\log n}
+
C\tau.
\end{align}
Choose
\begin{align*}
\tau
=
K\frac{\log\log n}{\log n},
\end{align*}
where \(K>0\) is large enough so that \(sK>2\). Combining \eqref{eq:initial-discrepancy-split}, \eqref{eq:Rn-tail-geo-zero}, and \eqref{eq:Sgeo-small-window}, we obtain
\begin{align*}
\mathcal E_{A^o}[\mathcal X_n^{\mathrm{geo}},\mathcal X_n^0]
\le
C\frac{\log\log n}{\log n}.
\end{align*}
Since \(\mathrm e-\frac32>1\), the second estimate follows. The proof is complete.
\end{proof}

\noindent The estimates \eqref{eq:chain-target-estimate}, together with Lemma~\ref{lem:true-conditioning-vs-distinct} and the triangle inequality for \(\mathcal E_{A^o}\), imply that
\begin{align}\label{eq:total-event-discrepancy}
\mathcal E_{A^o}[\mathcal X_n^{\mathrm{geo}},\mathcal X_n^3]
\le
C\,\frac{(\log\log n)^{\mathrm e-\frac32}}{\log n}.
\end{align}
Moreover, the metric part of \eqref{eq:chain-target-estimate} gives
\begin{align}\label{eq:total-metric-discrepancy}
\E[\mathfrak d(\mathcal X_n^0,\mathcal X_n^3)]
\le
C\,\frac{(\log\log n)^{\mathrm e-\frac32}}{\log n}.
\end{align}
The second step of the proof, carried out in Section~\ref{sec:conditioningstep}, transfers \eqref{eq:total-event-discrepancy} and \eqref{eq:total-metric-discrepancy} to the conditioned laws. The limiting conditioned law is identified as follows. If \(\eta\) is a Poisson point process on \((0,1]\) with intensity \(x^{-1}\,dx\), then Lemma~\ref{lem:attenuated-PD-poisson-cloud} gives
\begin{align*}
\mathcal L\big(\mathfrak q(\eta)\mid T(\eta)\le1\big)
=
\mathfrak A[\theta_{\mathcal S}],
\qquad
T(\eta):=\int_{(0,1]}x\,\eta(dx).
\end{align*}
Thus, after passing from point measures to their ordered atom sequences, conditioning the limiting Poisson cloud on \(A^o\) gives precisely the uniformly attenuated Poisson-Dirichlet law appearing in Theorem~\ref{theorem:main}.\\

\noindent The analysis of the unconditioned approximation chain begins with a Poissonization step, embedding the right-hand side of \eqref{Tpushfwardexpression} into a Poisson point process representation. This allows us to approximate it by a simpler Poisson process with explicit error bounds.

\subsection{Poissonization  step}\label{sec:poissonizationsteop}
Define the ambient space 
\[
\mathbbm{Y} := \{(p,k): p\in \mathcal{P},\ k\in\mathbb{N}\},
\]
and equip this space with the counting $\sigma$-field. Let $\ell$ be the measure on $\mathbbm{Y}$ given by
\[
\ell[p,k]:=\ell[\{(p,k)\}]:=\frac{1}{k p^k},
\]
with $(p,k)$ in $ \mathbbm{Y}$. Let $N$ be a Poisson point process on $ \mathbbm{Y}$ with intensity $\ell$.  Defining the kernel
\[
f_p:\mathbbm{Y}\to\R_+,
\qquad
f_p(q,k):=k\,\mathbbm{1}_{\{q=p\}},
\]
we have the Poisson integral representation
\[
\xi_p = \int_{\mathbbm{Y}} f_p(y)\,N(dy).
\]
Define then the random variables $\tilde{\xi}_{p}:=N[\{(p,1)\}]$. Then \(\tilde \xi_p\) is Poisson distributed with intensity $1/p$, and the variables \(\{\tilde \xi_p\ ;\ p\in\mathcal P\}\) are independent. We also define the point processes
\begin{align}\label{eq:newgoalpoitmeasure}
\mathcal{X}_n^{0}
  :=\sum_{p\in\mathcal{P}_n} \mathbbm{1}_{\{\xi_p\geq 1\}}
\delta_{\frac{\log p}{\log n}}
\quad\quad\quad\quad
\mathcal{X}_n^{1}
  :=\sum_{p\in\mathcal{P}_n}\tilde{\xi}_p
\delta_{\frac{\log p}{\log n}}.	
\end{align}
Observe that $\mathcal{X}_n^{0}$ is a simple point process with intensity measure 
\begin{align*}
\mu_n^{0}
  &=\sum_{p\in\mathcal{P}_n}\frac{1}{p}\delta_{\frac{\log(p)}{\log(n)}}.	
\end{align*}
We will show that   
\begin{align}\label{eq:goalpoissonization}
\E[\mathfrak{d}(\mathcal{X}_n^{0},\mathcal{X}_n^{1})]
  &\leq \kappa_1/\log(n),
\end{align}
where 
\begin{align*}
\kappa_1
  &:=2\left(\frac{\log 2}{4}+\frac{\log 3}{9}+\frac{\log 4+1}{4}\right).	
\end{align*}
To do this, we  denote by $\varepsilon_n$ the left hand side of \eqref{eq:goalpoissonization}. Observe that by \eqref{eq:acouplingbounddfrak}, 

\begin{align*}
\varepsilon_n
  &\leq
\sum_{p\in\mathcal{P}_n}
\mathbb E\!\left[|\mathbbm{1}_{\{\xi_p\geq 1\}}-\tilde{\xi}_p|\right]
\frac{\log(p)}{\log(n)}.
\end{align*}
Since $\tilde{\xi}_p=N[\{(p,1)\}]$ and 
$$\mathbbm 1_{\{\xi_p\ge1\}}=\mathbbm 1_{\{N(\{(p,1)\})+\sum_{k\ge2}N(\{(p,k)\})\ge1\}},$$ 
we have the bound
\begin{align*}
\big|\mathbbm{1}_{\{\xi_p\ge1\}}-\tilde{\xi}_p\big|
\le
\big(\tilde{\xi}_p-1\big)_+
+
\sum_{k\ge2} N[\{(p,k)\}].
\end{align*}
Taking expectations, we obtain
\begin{align*}
\varepsilon_n
\le
\sum_{p\in\mathcal P_n}
\big(
\mathbb E\big[(\tilde{\xi}_p-1)_+\big]
+
\sum_{k\ge2}\ell[p,k]
\big)
\frac{\log p}{\log n}.
\end{align*}
Since $\tilde{\xi}_p\sim\mathrm{Poisson}(\ell[p,1])$, we have
\begin{align*}
\mathbb E\big[(\tilde{\xi}_p-1)_+\big]
=
\ell[p,1]
-
\big(1-e^{-\ell[p,1]}\big),
\end{align*}
and therefore
\begin{align*}
\varepsilon_n
\le
\sum_{p\in\mathcal P_n}
\left(
\ell[p,1]
-
1
+
e^{-\ell[p,1]}
+
\sum_{k\ge2}\ell[p,k]
\right)
\frac{\log p}{\log n}.
\end{align*}
Recall that $\ell[p,k]=\frac{1}{k p^k}$, so we have the identities $\ell[p,1]=\frac{1}{p}$ and
\begin{align*}
\sum_{k\ge2}\ell[p,k]
=
\sum_{k\ge2}\frac{1}{k p^k}
=
\sum_{k\ge1}\frac{1}{k p^k}-\frac{1}{p}
=
-\log\!\Big(1-\frac{1}{p}\Big)-\frac{1}{p}.
\end{align*}
Consequently, 
\begin{align*}
\varepsilon_n
\le
\sum_{p\in\mathcal P_n}
\left(
-1
+
e^{-1/p}
-
\log\!\Big(1-\frac{1}{p}\Big)
\right)
\frac{\log p}{\log n}.
\end{align*}
Using the Taylor expansions at $x=0$, one can show that
\begin{align*}
0\le -1+e^{-1/p}-\log\!\Big(1-\frac1p\Big)\le \frac{2}{p^2},
\end{align*}
yielding the inequality 
\begin{align*}
\varepsilon_n
\leq
\frac{2}{\log n}\sum_{p\in\mathcal P_n}\frac{\log p}{p^{2}}.
\end{align*}
Removing the first two prime terms and then enlarging the sum from primes to integers and applying an integral comparison yields
\begin{align*}
\varepsilon_n
\le
\frac{2}{\log n}
\left(
\frac{\log 2}{4}+\frac{\log 3}{9}+\frac{\log 4+1}{4}
\right).
\end{align*}

\noindent To estimate $\mathcal E_{A^o}[\mathcal X_n^{0},\mathcal X_n^{1}]$, we define
\[
S_n^{1}
:=
\frac{1}{\log n}
\sum_{p\in\mathcal P_n}\tilde\xi_p\log p
\quad \quad \quad \quad \quad \quad \quad \quad 
D_n
:=
\frac{1}{\log n}
\sum_{p\in\mathcal P_n}
\big|
\mathbbm 1_{\{\xi_p\ge1\}}-\tilde\xi_p
\big|\,\log p.
\]
Recall the set \(A^o\) defined in \eqref{eq:Aodef}. One can easily check that 
\[
\{\mathcal X_n^{0}\in A^{o}\}\Delta\{\mathcal X_n^{1}\in A^{o}\}
\subseteq
\big\{
|S_n^{1}-1|
\le
D_n
\big\},
\]
and therefore
\begin{align*}
\mathcal E_{A^o}[\mathcal X_n^{0},\mathcal X_n^{1}]
\le
\Pb\big[|S_n^{1}-1|\le D_n\big].
\end{align*}
We split the right-hand side as
\begin{align*}
\Pb\big[|S_n^{1}-1|\le D_n\big]
&\le
\varepsilon_n^{1}(\tau)+\varepsilon_n^{2}(\tau),
\end{align*}
where 
\begin{align*}
\varepsilon_n^{1}(\tau)
  :=\Pb[D_n>\tau],
  \quad\quad\quad\quad\quad\quad
\varepsilon_n^{2}(\tau)
  :=\Pb\big[|S_n^{1}-1|\le D_n,\ D_n\le \tau\big].
\end{align*}

\noindent We now derive a bound for the tail probability of $D_n$ using an exponential-Chebyshev argument. For $p$ in \(\mathcal P_n\), define
\[
Y_p
:=
\big|\mathbbm 1_{\{\xi_p\ge1\}}-\tilde\xi_p\big|,
\qquad a_p:=N[\{(p,1)\}],
\qquad
b_p:=\sum_{k\ge2}N[\{(p,k)\}].
\]
The pairs \((a_p,b_p)\) are independent, and for each fixed \(p\), the random variables \(a_p\) and \(b_p\) are independent. With these definitions, the event $\{b_p=0\}$ coincides with the event that no atom of the form $(p,k)$ with $k\ge2$ appears. Consequently, after some simple case analysis, we can show that
\[
Y_p
=
\begin{cases}
(a_p-1)_+, & \text{if }\  b_p=0,\\[0.3em]
(a_p-1)_+ + \mathbbm 1_{\{a_p=0\}}, & \text{if }\  b_p\ge1.
\end{cases}
\]
A direct computation using the independence of \(a_p\) and \(b_p\), together with the preceding case distinction, gives
\begin{align*}
\E[p^{sY_p}]
&=
\E[p^{s(a_p-1)_+}]
+
(1-e^{-\lambda_p})e^{-1/p}(p^s-1),
\end{align*}
where
\begin{align*}
\lambda_p:=\sum_{k\ge2}\frac{1}{kp^k}.
\end{align*}
Using \(p^{s-1}\le1\), one obtains
\begin{align}\label{eq:boundexpoYp}
\E[e^{sY_p \log p}]
=
\E[p^{sY_p}]
\le \exp\!\left(\frac{\mathrm e-\frac 32}{p^{2-s}}\right),
\end{align}
for \(p \ge 2\) and \(0 \le s < 1\). Now fix $0<s<1$. By exponential Chebyshev,
\begin{align*}
\Pb[D_n>\tau]
&\le
e^{-s\tau\log n}
\E \big[
\exp \big\{
s\sum_{p\in\mathcal P_n}Y_p\log p
\big\}
\big]
=
e^{-s\tau\log n}
\prod_{p\in\mathcal P_n}
\E\left[e^{sY_p\log p}\right].
\end{align*}

By \eqref{eq:boundexpoYp}, we then have that
\begin{align*}
\Pb[D_n>\tau]
&\le
e^{-s\tau\log n}
\prod_{p\in\mathcal P_n}
\exp\!\left\{ \frac{e-\frac 32}{p^{2-s}}\right\}
=
\exp\left\{
-s\tau\log n
+
\sum_{p\in\mathcal P_n}\frac{e-\frac 32}{p^{2-s}}
\right\}.
\end{align*}
\noindent We now derive an explicit bound for the sum in the right-hand side. Define
\[
A(x):=\sum_{p\in\mathcal{P}}\mathbbm{1}_{\{p\le x\}}\frac{1}{p}.
\]
Then, by summation by parts, for $n>3$, 
\begin{align*}
\sum_{p\le n}\frac{1}{p^{2-s}}=\frac{1}{2^{2-s}}+\sum_{3\leq p\le n}\frac{1}{p^{2-s}}
&=
\frac{1}{2^{2-s}}+
\frac{A(n)}{n^{1-s}}-\frac{1}{3^{2-s}}
+
(1-s)\int_3^n \frac{A(x)}{x^{2-s}}\,dx.
\end{align*}
Using \eqref{Secondmertens}, we have that $A(x)\le 3+\log\log x$ for $x\ge 3$, which in combination with the identity above, yields
\begin{align*}
\sum_{p\le n}\frac{1}{p^{2-s}}
&\le
\frac{1}{2^{2-s}}
+
\frac{3+\log\log n}{n^{1-s}}
+
(1-s)\int_3^n \frac{3+\log\log x}{x^{2-s}}\,dx.
\end{align*}
To estimate the integral, we apply the change of variables $u=(1-s)\log x$, which yields
\begin{align*}
(1-s)\int_3^n \frac{3+\log\log x}{x^{2-s}}\,dx
&=
\int_{(1-s)\log 3}^{(1-s)\log n}
\left( 3+\log u+\log \left( \frac{ 1} {1-s} \right) \right) e^{-u}\,du.
\end{align*}
From here it easily follows that
\begin{align*}
(1-s)\int_3^n \frac{3+\log\log x}{x^{2-s}}\,dx
&\le
3+\log \left( \frac{ 1} {1-s} \right) +2
=
5+\log \left( \frac{ 1} {1-s} \right) .
\end{align*}
Combining the above estimates, we obtain
\begin{align*}
\sum_{p\le n}\frac{1}{p^{2-s}}
&\le
\frac{1}{2^{2-s}}
+
\frac{3+\log\log n}{n^{1-s}}
+
5+\log \left( \frac{ 1} {1-s} \right).
\end{align*}
It then follows that 
\begin{align}\label{eq:boundDnED}
\Pb[D_n>\tau]
&\le
\exp \left \{
-s\tau\log n
+
\left(e-\frac 32 \right) \left( 2^{s-2}
+
\frac{3+\log\log n}{n^{1-s}}
+
5+\log\!\left( \frac{1}{1-s}\right) \right)
\right \}.
\end{align}
Take, for $n$ sufficiently large,
\[
\tau:=\frac{\log\log n}{\log n}
\qquad\text{and}\qquad
s:=1-\frac{e-\frac 32}{\log\log n},
\]
so that \eqref{eq:boundDnED} yields
\begin{align*}
\Pb[D_n>\tau]
&\le
\frac{C}{\log n}
\exp\left\{
\left( e-\frac 32 \right)
\left(
2^{-1-\frac{e-3/2}{\log\log n}}
+
\frac{3+\log\log n}{n^{\frac{e- 3/2}{\log\log n}}}
+ \log\log\log n\right)
\right\}.
\end{align*}
By elementary computations, we can show that the term 
\begin{align*}
2^{-1-\frac{e-3/2}{\log\log n}}
+
\frac{3+\log\log n}{n^{\frac{e- 3/2}{ \log\log n }}}
\end{align*}
is bounded uniformly for $n$ sufficiently large, and thus, we can guarantee the existence of a constant $C>0$, such that 
\begin{align*}
\Pb[D_n>\tau]
&\le
\frac{C}{\log n}
\exp\left\{ \left( e-\frac 32 \right)\log\log\log n
\right\}.
\end{align*}
After suitable algebraic manipulations, we get 
\begin{align*}
\varepsilon_n^{1}(\tau)
&\le
\frac{C}{\log n}\,(\log\log n)^{e-\frac 32},
\end{align*}
for some universal constant \(C>0\).\\

\noindent To control $\varepsilon_n^{2}$, define
\[
S_n^{\mathrm{geo}}
:=
\frac{1}{\log n}
\sum_{p\in\mathcal P_n}\xi_p\log p
\]
and
\[
R_n
:=
S_n^{\mathrm{geo}}-S_n^{1}
=
\frac{1}{\log n}
\sum_{p\in\mathcal P_n}
\sum_{k\ge2}k\,N[\{(p,k)\}]\,\log p.
\]
Then, for every $\delta>0$,
\[
\big\{|S_n^{1}-1|\le \tau\big\}
\subseteq
\big\{|S_n^{\mathrm{geo}}-1|\le \tau+\delta\big\}
\cup
\{R_n>\delta\}.
\]
Consequently,
\[
\varepsilon_n^{2}
\le
\Pb\big[|S_n^{\mathrm{geo}}-1|\le \tau+\delta\big]
+
\Pb[R_n>\delta].
\]
By Lemma \ref{lem:kolmmain} and the boundedness of $\rho$ on compact intervals, there exists a universal constant \(C>0\) such that
\[
\Pb\big[|S_n^{\mathrm{geo}}-1|\le \tau+\delta\big]
\le
\frac{C}{\log n}+C(\tau+\delta).
\]
Moreover, fix $0<t<1/2$. Exponential Chebyshev gives
\begin{align*}
\Pb[R_n>\delta]
&\le
e^{-t\delta\log n}
\E\big[
\exp\big\{
t\sum_{p\in\mathcal P_n}
\sum_{k\ge2}k\,N[\{(p,k)\}]\,\log p
\big\}
\big] \\
&=
e^{-t\delta\log n}
\exp\big\{
\sum_{p\in\mathcal P_n}\sum_{k\ge2}
\frac{p^{tk}-1}{kp^k}
\big\} 
\le
C_t n^{-t\delta},
\end{align*}
where
\[
C_t
:=
\exp\big\{
\sum_{p\in\mathcal P}\sum_{k\ge2}
\frac{1}{k p^{k(1-t)}}
\big\}
<\infty.
\]
Take $\delta_n$ as the unique solution of
\[
\delta_n=n^{-t\delta_n}.
\]
Let $V$ be the inverse of $u\mapsto ue^u$. Then we can write $\delta_n$ as 
\[
\delta_n
=
\frac{V(t\log n)}{t\log n}.
\]
One can easily check that  
$$\limsup_{x\rightarrow\infty}V(x)/\log x<\infty,$$ 
which yields the inequality
\[
\delta_n\le C\frac{\log\log n}{\log n},
\]
for some constant  $C>0$. Therefore
\begin{align*}
\varepsilon_n^{2}
&\le
\frac{C}{\log n}+C\tau+(C+C_t)\delta_n
\le
C\frac{\log\log n}{\log n}.
\end{align*}
Combining the estimates for \(\varepsilon_n^{1}\) and \(\varepsilon_n^{2}\), we conclude that
\begin{align}\label{ineq:Deltabound}
\mathcal E_{A^o}[\mathcal X_n^{0},\mathcal X_n^{1}]
&\le
\frac{C}{\log n}\,(\log\log n)^{e-\frac 32},
\end{align}
for some universal constant \(C>0\).

\subsection{Simple point process approximation for \(\mathcal X_n^1\)}

The goal of this section is to construct an \(\mathbf N(\mathbbm X)\)-valued Poisson random variable \(\mathcal X_n^2\) which is close to \(\mathcal X_n^1\), and whose control measure is a shifted absolutely continuous approximation of the Dickman control measure, with a small correction in the underlying control measure. Set
\begin{align*}
x_n:=\frac{\log 2}{\log n},
\end{align*}
and consider the truncated Dickman control measure
\begin{align*}
\nu_n(dx)
:=
\frac{1}{x}\mathbbm 1_{(x_n,1]}(x)\,dx.
\end{align*}
For \(0<t\le1\), the control measure of \(\mathcal X_n^1\) satisfies
\begin{align*}
\mu_n^1[(t,1]]
&=
\sum_{p\in\mathcal P_n}\frac{1}{p}\,
\mathbbm 1_{\{\frac{\log p}{\log n}>t\}}
=
\sum_{p\in\mathcal P}
\mathbbm 1_{(n^t,n]}(p)\frac{1}{p}.
\end{align*}
For \(x>0\), define
\begin{align*}
M(x):=\sum_{p\le x}\frac{1}{p}.
\end{align*}
By Mertens' formula, after increasing the constant if necessary, there exists \(B\in\mathbb R\) such that, for all \(x\ge2\),
\begin{align}\label{eq:mertensrefffort}
\left|
M(x)-\log\log x-B
\right|
\le
\frac{4}{\log x}.
\end{align}
Therefore, for \(t\ge x_n\),
\begin{align*}
\left|
\mu_n^1[(t,1]]+\log t
\right|
&=
\left|
M(n)-M(n^t)+\log t
\right|
\le
\frac{4}{\log n}
+
\frac{4}{t\log n}.
\end{align*}

\noindent We now introduce the absolutely continuous approximation that will be used to dominate the prime control measure at the level of upper tails. The estimate above shows that the discrepancy between \(\mu_n^1[(t,1]]\) and \(-\log t\) contains a term of order \((t\log n)^{-1}\). To compensate for this term, we shift the truncated Dickman measure slightly to the right in the following way. Define
\begin{align*}
\delta_n:=\frac{K_0}{\log n},
\qquad
y_n:=x_n+\delta_n,
\end{align*}
where \(K_0>0\) is a fixed numerical constant, independent of \(n\), whose value will be specified later. Define
\begin{align*}
\Phi_n(x):=(x+\delta_n)\wedge1.
\end{align*}
The pushforward \((\Phi_n)_{\#}\nu_n\) is therefore a right-shifted version of the truncated Dickman control measure. This shift increases the upper tails by an amount of order
\begin{align*}
-\log(t-\delta_n)+\log t
\sim
\frac{\delta_n}{t}
=
\frac{K_0}{t\log n},
\end{align*}
which is precisely the scale needed to absorb the tail error obtained from Mertens' formula.\\

\noindent
The right shift above will be used to control the upper-tail comparison for thresholds \(t\ge y_n\). For the remaining thresholds, those below \(y_n\), we add a fixed amount of mass at the left edge of the shifted support. Let \(a>0\) be another fixed numerical constant, independent of \(n\), whose value will be specified later, and define
\begin{align*}
\mu_n^2
:=
(\Phi_n)_{\#}\nu_n
+
a\,\delta_{y_n}.
\end{align*}
The atom at \(y_n\) contributes to every upper tail \((t,1]\) with \(t<y_n\), and this is the extra mass that will be used to handle the small-threshold region. Since \(y_n\) is of order \(1/\log n\), this correction has small cost in the metric \(\mathfrak d\). Notice also that the pushforward \((\Phi_n)_{\#}\nu_n\) may create an atom at \(1\), coming from the interval \((1-\delta_n,1]\), but this atom has intensity of order \(1/\log n\).\\

\noindent
We now introduce a small auxiliary modification of the control measure of \(\mathcal X_n^1\). This modification is used only to match the total mass of \(\mu_n^2\), so that Lemma~\ref{Propcouplmonotone} can be applied without any residual Poisson component. Define
\begin{align*}
b_n
:=
\mu_n^2((0,1])
-
\mu_n^1((0,1])
=
-\log x_n+a-M(n),
\end{align*}
and set
\begin{align*}
\widehat\mu_n^1
:=
\mu_n^1+b_n\delta_{x_n}.
\end{align*}
Since
\begin{align*}
M(n)+\log x_n
=
B+\log\log 2+O\left(\frac1{\log n}\right),
\end{align*}
as follows from \eqref{eq:mertensrefffort}, we may choose the fixed constant \(a\) so that
\begin{align*}
0\le b_n\le C
\end{align*}
for all \(n\) sufficiently large. By construction,
\begin{align*}
\widehat\mu_n^1((0,1])
=
\mu_n^2((0,1]).
\end{align*}

\noindent
We next check the upper-tail order. We claim that, after fixing \(K_0\) and \(a\) appropriately,
\begin{align*}
\widehat\mu_n^1[(t,1]]
\le
\mu_n^2[(t,1]],
\qquad 0<t\le1.
\end{align*}
Indeed, let first \(y_n\le t<1\). Then neither the atom \(b_n\delta_{x_n}\) nor the atom \(a\delta_{y_n}\) contributes to the tail \((t,1]\). Therefore
\begin{align*}
\widehat\mu_n^1[(t,1]]
=
\mu_n^1[(t,1]]
&\le
-\log t
+
\frac{4}{\log n}
+
\frac{4}{t\log n},
\end{align*}
whereas
\begin{align*}
\mu_n^2[(t,1]]
&=
\nu_n[(t-\delta_n,1]]
=
-\log(t-\delta_n).
\end{align*}
Since
\begin{align*}
-\log(t-\delta_n)+\log t
&=
-\log\left(1-\frac{\delta_n}{t}\right)
\ge
\frac{\delta_n}{t}
=
\frac{K_0}{t\log n},
\end{align*}
and
\begin{align*}
\frac{4}{\log n}
+
\frac{4}{t\log n}
\le
\frac{8}{t\log n},
\end{align*}
we obtain
\begin{align*}
\widehat\mu_n^1[(t,1]]
\le
\mu_n^2[(t,1]],
\qquad y_n\le t<1,
\end{align*}
provided \(K_0\ge8\). The case \(t=1\) is trivial.\\

\noindent
If \(x_n\le t<y_n\), then the atom \(b_n\delta_{x_n}\) does not contribute to \(\widehat\mu_n^1[(t,1]]\), while the atom \(a\delta_{y_n}\) contributes to \(\mu_n^2[(t,1]]\). Hence
\begin{align*}
\mu_n^2[(t,1]]
\ge
\nu_n[(x_n,1]]
+
a
=
-\log x_n+a.
\end{align*}
On the other hand, by \eqref{eq:mertensrefffort} and the bound \(t\ge x_n\),
\begin{align*}
\widehat\mu_n^1[(t,1]]
=
\mu_n^1[(t,1]]
&\le
-\log t
+
\frac{4}{\log n}
+
\frac{4}{t\log n}
\\
&\le
-\log x_n
+
\frac{4}{\log n}
+
\frac{4}{\log 2}.
\end{align*}
Thus, after choosing \(a\) large enough, we get
\begin{align*}
\widehat\mu_n^1[(t,1]]
\le
\mu_n^2[(t,1]],
\qquad x_n\le t<y_n.
\end{align*}

\noindent Finally, if \(0<t<x_n\), then both \(\widehat\mu_n^1\) and \(\mu_n^2\) have no mass in \((0,t]\). Hence
\begin{align*}
\widehat\mu_n^1[(t,1]]
=
\widehat\mu_n^1((0,1])
=
\mu_n^2((0,1])
=
\mu_n^2[(t,1]].
\end{align*}
Therefore
\begin{align*}
\widehat\mu_n^1\preceq\mu_n^2.
\end{align*}

\noindent
By Lemma~\ref{Propcouplmonotone}, applied to the measures \(\widehat\mu_n^1\) and \(\mu_n^2\), there exist Poisson point processes \(\widehat{\mathcal X}_n^1\) and \(\mathcal X_n^2\), with control measures \(\widehat\mu_n^1\) and \(\mu_n^2\), respectively, coupled monotonically. In particular,
\begin{align*}
\int_{\mathbbm X}x\,\widehat{\mathcal X}_n^1(dx)
\le
\int_{\mathbbm X}x\,\mathcal X_n^2(dx)
\end{align*}
almost surely.\\

\noindent
Finally, we realize \(\widehat{\mathcal X}_n^1\) as a small left-edge perturbation of the original process:
\begin{align*}
\widehat{\mathcal X}_n^1
=
\mathcal X_n^1
+
Z_n\delta_{x_n},
\end{align*}
where \(Z_n\) is independent of \(\mathcal X_n^1\) and has Poisson distribution with mean \(b_n\). Since \(b_n\le C\), we have
\begin{align}\label{eq:X1-X1hat-cost}
\E[\mathfrak d(\mathcal X_n^1,\widehat{\mathcal X}_n^1)]
\le
x_n\E[Z_n]
\le
\frac{C}{\log n}.
\end{align}

\noindent We now estimate the cost between \(\widehat{\mathcal X}_n^1\) and \(\mathcal X_n^2\). Since the coupling is monotone and the total masses of \(\widehat\mu_n^1\) and \(\mu_n^2\) agree, the expected cost is bounded by the difference of the first moments:
\begin{align*}
\E[\mathfrak d(\widehat{\mathcal X}_n^1,\mathcal X_n^2)]
\le
\int_{\mathbbm X}x\,(\mu_n^2-\widehat\mu_n^1)(dx).
\end{align*}
We estimate the right hand side. First,
\begin{align*}
\int_{\mathbbm X}x\,\mu_n^1(dx)
=
\frac{1}{\log n}
\sum_{p\in\mathcal P_n}\frac{\log p}{p}.
\end{align*}
By \eqref{Firstmertens}, for \(n\) sufficiently large,
\begin{align*}
\left|
\int_{\mathbbm X}x\,\mu_n^1(dx)-1
\right|
\le
\frac{3}{\log n}.
\end{align*}
Moreover,
\begin{align*}
\int_{\mathbbm X}x\,\widehat\mu_n^1(dx)
=
\int_{\mathbbm X}x\,\mu_n^1(dx)
+
b_nx_n.
\end{align*}
Since \(b_n\le C\), the extra term is \(O(1/\log n)\). On the other hand,
\begin{align*}
\int_{\mathbbm X}x\,(\Phi_n)_{\#}\nu_n(dx)
&=
\int_{(x_n,1]}\Phi_n(x)\frac{dx}{x}
\le
\int_{x_n}^1x\,\frac{dx}{x}
+
\delta_n\int_{x_n}^1\frac{dx}{x}
=
1-x_n
+
\delta_n\log\left(\frac1{x_n}\right),
\end{align*}
and
\begin{align*}
\int_{\mathbbm X}x\,a\delta_{y_n}(dx)
=
ay_n
\le
\frac{C}{\log n}.
\end{align*}
Combining these estimates, we obtain
\begin{align*}
\int_{\mathbbm X}x\,(\mu_n^2-\widehat\mu_n^1)(dx)
\le
C\frac{\log\log n}{\log n}.
\end{align*}
Together with \eqref{eq:X1-X1hat-cost}, this gives
\begin{align*}
\E[\mathfrak d(\mathcal X_n^1,\mathcal X_n^2)]
\le
C\frac{\log\log n}{\log n}.
\end{align*}
Since \(\mathrm e-\frac32>1\), this also implies
\begin{align*}
\E[\mathfrak d(\mathcal X_n^1,\mathcal X_n^2)]
\le
C\frac{(\log\log n)^{\mathrm e-\frac32}}{\log n}.
\end{align*}

\noindent
We now estimate \(\mathcal E_{A^o}[\mathcal X_n^2,\mathcal X_n^1]\). By the triangle inequality for \(\mathcal E_{A^o}\),
\begin{align*}
\mathcal E_{A^o}[\mathcal X_n^2,\mathcal X_n^1]
\le
\mathcal E_{A^o}[\mathcal X_n^2,\widehat{\mathcal X}_n^1]
+
\mathcal E_{A^o}[\widehat{\mathcal X}_n^1,\mathcal X_n^1].
\end{align*}

\noindent
We first handle the second term. Set
\begin{align*}
S_n^1
:=
\int_{\mathbbm X}x\,\mathcal X_n^1(dx),
\qquad
\widehat S_n^1
:=
\int_{\mathbbm X}x\,\widehat{\mathcal X}_n^1(dx).
\end{align*}
Then
\begin{align*}
\widehat S_n^1-S_n^1=x_nZ_n.
\end{align*}
For every \(r>0\),
\begin{align*}
\mathcal E_{A^o}[\widehat{\mathcal X}_n^1,\mathcal X_n^1]
&\le
\Pb[x_nZ_n>r]
+
\Pb[1-r\le S_n^1\le1].
\end{align*}

\noindent
We next handle the first term. Set
\begin{align*}
S_n^2
:=
\int_{\mathbbm X}x\,\mathcal X_n^2(dx),
\end{align*}
and define
\begin{align*}
R_n:=S_n^2-\widehat S_n^1.
\end{align*}
Under the monotone coupling,
\begin{align*}
\widehat S_n^1\le S_n^2.
\end{align*}
Hence
\begin{align*}
\mathcal E_{A^o}[\mathcal X_n^2,\widehat{\mathcal X}_n^1]
=
\Pb[\widehat S_n^1\le1<S_n^2]
=
\Pb[1-R_n\le \widehat S_n^1\le1].
\end{align*}
For every \(r>0\),
\begin{align*}
\Pb[1-R_n\le \widehat S_n^1\le1]
&\le
\Pb[R_n>r]
+
\Pb[1-r\le \widehat S_n^1\le1].
\end{align*}

\noindent
We now choose
\begin{align*}
r_n:=K_1\frac{\log\log n}{\log n},
\end{align*}
with \(K_1>0\) large enough. We claim that the inverse-tail coupling may be chosen so that matched atoms move by at most \(2\delta_n\). To see this, it is enough to prove the reverse shifted tail inequality
\begin{align}\label{eq:reverse-shifted-tail}
\mu_n^2[(t+2\delta_n,1]]
\le
\widehat\mu_n^1[(t,1]],
\end{align}
for $ 0<t<1.$ Indeed, together with \(\widehat\mu_n^1\preceq\mu_n^2\), this implies that the inverse-tail maps used in Lemma~\ref{Propcouplmonotone} differ by at most \(2\delta_n\).\\

\noindent Let us prove \eqref{eq:reverse-shifted-tail}. If \(0<t<x_n\), then
\begin{align*}
\mu_n^2[(t+2\delta_n,1]]
\le
\mu_n^2((0,1])
=
\widehat\mu_n^1((0,1])
=
\widehat\mu_n^1[(t,1]].
\end{align*}
If \(x_n\le t<1\), then \(t+2\delta_n>y_n\), so the atom \(a\delta_{y_n}\) does not contribute to \(\mu_n^2[(t+2\delta_n,1]]\). Moreover, if \(t+2\delta_n\ge1\), the left hand side is zero. Otherwise,
\begin{align*}
\mu_n^2[(t+2\delta_n,1]]
&=
\nu_n[(t+\delta_n,1]]
=
-\log(t+\delta_n).
\end{align*}
On the other hand, by \eqref{eq:mertensrefffort},
\begin{align*}
\widehat\mu_n^1[(t,1]]
=
\mu_n^1[(t,1]]
&\ge
-\log t
-
\frac{4}{\log n}
-
\frac{4}{t\log n}.
\end{align*}
After increasing \(K_0\), we have
\begin{align*}
\log\left(1+\frac{\delta_n}{t}\right)
\ge
\frac{4}{\log n}
+
\frac{4}{t\log n},
\end{align*}
for $x_n\leq t<1$. Indeed, writing \(u=t\log n\), this reduces to choosing \(K_0\) so that
\begin{align*}
\log\left(1+\frac{K_0}{u}\right)
\ge
\frac{4}{\log n}+\frac{4}{u},
\end{align*}
for $\log 2\leq u\leq \log n$. This is possible uniformly in \(n\), since the right hand side is bounded by \(8/u\) and \(u\log(1+K_0/u)\) can be made uniformly larger than \(8\) on \(u\ge\log 2\) by choosing \(K_0\) large enough. Therefore
\begin{align*}
-\log(t+\delta_n)
\le
-\log t
-
\frac{4}{\log n}
-
\frac{4}{t\log n}
\le
\widehat\mu_n^1[(t,1]].
\end{align*}
This proves \eqref{eq:reverse-shifted-tail}.\\

\noindent
By Lemma~\ref{Propcouplmonotone}, we may list the atoms of
\(\widehat{\mathcal X}_n^1\) and \(\mathcal X_n^2\) as
\((\widehat X_k^1)_{k\ge1}\) and \((X_k^2)_{k\ge1}\), respectively, in such a way that
\begin{align*}
\widehat X_k^1\le X_k^2
\end{align*}
for every \(k\ge1\). Moreover, the reverse shifted tail estimate \eqref{eq:reverse-shifted-tail} implies that these matched atoms satisfy
\begin{align*}
X_k^2-\widehat X_k^1\le 2\delta_n,
\end{align*}
for $k\ge1$. Therefore, setting
\begin{align*}
N_n:=\widehat{\mathcal X}_n^1(\mathbbm X),
\end{align*}
we have
\begin{align*}
R_n
=
S_n^2-\widehat S_n^1
=
\sum_{k\ge1}(X_k^2-\widehat X_k^1)
\le
2\delta_n N_n.
\end{align*}
Since \(\widehat{\mathcal X}_n^1\) has control measure \(\widehat\mu_n^1\), the random variable \(N_n\) is Poisson with mean
\begin{align*}
\widehat\mu_n^1((0,1])
=
\mu_n^2((0,1])
\le
C\log\log n.
\end{align*}

\noindent By Chernoff's bound,
\begin{align*}
\Pb[R_n>r_n]
\le
C(\log n)^{-2},
\end{align*}
provided \(K_1\) is chosen large enough. Similarly, since \(Z_n\) has bounded mean and \(x_n\) is of the order $1/\log n$,
\begin{align*}
\Pb[x_nZ_n>r_n]
\le
C(\log n)^{-2}.
\end{align*}

\noindent
It remains to control the concentration terms. Define
\begin{align*}
\psi_n(r)
:=
\Pb[1-r\le S_n^1\le1],
\end{align*}
for $0\leq r\leq 1/2$. Recall that
\begin{align*}
\mathcal X_n^{\mathrm{geo}}
:=
\sum_{p\in\mathcal P_n}\xi_p\,
\delta_{\frac{\log p}{\log n}},
\end{align*}
and set
\begin{align*}
S_n^{\mathrm{geo}}
:=
\int_{\mathbbm X}x\,\mathcal X_n^{\mathrm{geo}}(dx).
\end{align*}
By construction,
\begin{align*}
S_n^{\mathrm{geo}}-S_n^1
=
\frac{1}{\log n}
\sum_{p\in\mathcal P_n}
\sum_{k\ge2}k\,N[\{(p,k)\}]\,\log p.
\end{align*}
It follows that, for \(0\leq r\leq 1/2\) and \(\alpha>0\),
\begin{align*}
\{1-r\le S_n^1\le1\}
&\subseteq
\{1-r\le S_n^{\mathrm{geo}}\le1+\alpha\}
\\
&\qquad\cup
\left\{
\frac{1}{\log n}
\sum_{p\in\mathcal P_n}
\sum_{k\ge2}k\,N[\{(p,k)\}]\,\log p>\alpha
\right\}.
\end{align*}
By Lemma \ref{lem:kolmmain} and the boundedness of \(\rho\) on compact intervals, there exists \(C>0\) such that
\begin{align*}
\Pb[1-r\le S_n^{\mathrm{geo}}\le1+\alpha]
\le
\frac{C}{\log n}
+
C(r+\alpha).
\end{align*}
Moreover, fixing \(t\in(0,1/2)\), exponential Chebyshev gives
\begin{align*}
\Pb\left[
\frac{1}{\log n}
\sum_{p\in\mathcal P_n}
\sum_{k\ge2}k\,N[\{(p,k)\}]\,\log p>\alpha
\right]
\le
C_t n^{-t\alpha},
\end{align*}
where
\begin{align*}
C_t
:=
\exp\left\{
\sum_{p\in\mathcal P}
\sum_{k\ge2}
\frac{1}{kp^{k(1-t)}}
\right\}
<\infty.
\end{align*}
Consequently,
\begin{align*}
\psi_n(r)
\le
\frac{C}{\log n}
+
C(r+\alpha)
+
C_t n^{-t\alpha}.
\end{align*}

\noindent
Choose
\begin{align*}
\alpha_n
:=
\frac{V(t\log n)}{t\log n},
\end{align*}
where \(V\) denotes the inverse function of \(u\mapsto ue^u\) on \([0,\infty)\). Then
\begin{align*}
n^{-t\alpha_n}
=
\alpha_n
\le
C\frac{\log\log n}{\log n}.
\end{align*}
Therefore
\begin{align*}
\psi_n(r_n)
\le
C\frac{\log\log n}{\log n}.
\end{align*}
Since
\begin{align*}
\widehat S_n^1=S_n^1+x_nZ_n,
\end{align*}
we have
\begin{align*}
\Pb[1-r_n\le \widehat S_n^1\le1]
&\le
\Pb[x_nZ_n>r_n]
+
\Pb[1-2r_n\le S_n^1\le1]
\le
\Pb[x_nZ_n>r_n]
+
\psi_n(2r_n).
\end{align*}
Combining the preceding estimates gives
\begin{align*}
\mathcal E_{A^o}[\mathcal X_n^2,\mathcal X_n^1]
\le
C\frac{\log\log n}{\log n}.
\end{align*}
Since \(\mathrm e-\frac32>1\), this also gives
\begin{align*}
\mathcal E_{A^o}[\mathcal X_n^2,\mathcal X_n^1]
\le
C\frac{(\log\log n)^{\mathrm e-\frac32}}{\log n}.
\end{align*}

\subsection{Approximation by the Dickman control measure}

The goal of this section is to construct an \(\mathbf N(\mathbbm X)\)-valued Poisson random variable \(\mathcal X_n^3\) with control measure
\begin{align*}
\mu_n^3(dx)
:=
\frac{1}{x}\mathbf 1_{(0,1]}(x)\,dx,
\end{align*}
and to show that \(\mathcal X_n^3\) is close to \(\mathcal X_n^2\) both in the Wasserstein distance associated with \(\mathfrak d\) and in the error term \(\mathcal E_{A^o}\).

Recall that
\begin{align*}
x_n:=\frac{\log 2}{\log n},
\qquad
\delta_n:=\frac{K_0}{\log n},
\qquad
y_n:=x_n+\delta_n,
\end{align*}
and that
\begin{align*}
\mu_n^2
=
(\Phi_n)_{\#}\nu_n
+
a\delta_{y_n},
\qquad
\Phi_n(x):=(x+\delta_n)\wedge1,
\end{align*}
where
\begin{align*}
\nu_n(dx)
=
\frac{1}{x}\mathbf 1_{(x_n,1]}(x)\,dx.
\end{align*}

\noindent Let \(\mathcal N_n^{(0)}\) and \(\mathcal N_n^{(1)}\) be independent Poisson point processes with control measures
\begin{align*}
\frac{1}{x}\mathbf 1_{(0,x_n]}(x)\,dx,
\qquad
\frac{1}{x}\mathbf 1_{(x_n,1]}(x)\,dx,
\end{align*}
respectively. Let \(Z_n\) be an independent Poisson random variable with parameter \(a\). Define
\begin{align*}
\mathcal X_n^3
&:=
\mathcal N_n^{(0)}+\mathcal N_n^{(1)},
\\
\mathcal X_n^2
&:=
(\Phi_n)_{\#}\mathcal N_n^{(1)}
+
Z_n\delta_{y_n}.
\end{align*}
By construction, \(\mathcal X_n^3\) has control measure \(\mu_n^3\), while \(\mathcal X_n^2\) has control measure \(\mu_n^2\). We first estimate \(\E[\mathfrak d(\mathcal X_n^3,\mathcal X_n^2)]\). By the definition of \(\mathfrak d\) and the above coupling,
\begin{align*}
\mathfrak d(\mathcal X_n^3,\mathcal X_n^2)
&\le
\int_{(0,x_n]}x\,\mathcal N_n^{(0)}(dx)
+
\int_{(x_n,1]}|\Phi_n(x)-x|\,\mathcal N_n^{(1)}(dx)
+
y_nZ_n.
\end{align*}
Since \(|\Phi_n(x)-x|\le\delta_n\), taking expectations gives
\begin{align*}
\E[\mathfrak d(\mathcal X_n^3,\mathcal X_n^2)]
&\le
\int_0^{x_n}dx
+
\delta_n\int_{x_n}^1\frac{dx}{x}
+
ay_n
=
x_n
+
\delta_n\log\left(\frac1{x_n}\right)
+
ay_n
\le
C\frac{\log\log n}{\log n}.
\end{align*}

\noindent Since \(\mathrm e-\frac32>1\), this also implies
\begin{align}\label{eq:Wd_X3_X2}
\E[\mathfrak d(\mathcal X_n^3,\mathcal X_n^2)]
\le
C\frac{(\log\log n)^{\mathrm e-\frac32}}{\log n}.
\end{align}

\noindent We now estimate \(\mathcal E_{A^o}[\mathcal X_n^3,\mathcal X_n^2]\), with \(A^o\) as in \eqref{eq:Aodef}.
Define
\begin{align*}
S_n^{(i)}
:=
\int_{\mathbbm X}x\,\mathcal X_n^{i}(dx),
\end{align*}
for $i=2,3$. Under the above coupling,
\begin{align*}
|S_n^{(3)}-S_n^{(2)}|
\le
R_n,
\end{align*}
where
\begin{align*}
R_n
:=
\int_{(0,x_n]}x\,\mathcal N_n^{(0)}(dx)
+
\int_{(x_n,1]}|\Phi_n(x)-x|\,\mathcal N_n^{(1)}(dx)
+
y_nZ_n.
\end{align*}
Thus, for every \(\varepsilon>0\),
\begin{align*}
\mathcal E_{A^o}[\mathcal X_n^3,\mathcal X_n^2]
&\le
\Pb\big[
|S_n^{(3)}-1|\le |S_n^{(3)}-S_n^{(2)}|
\big]
\\
&\le
\Pb\big[
|S_n^{(3)}-1|\le \varepsilon
\big]
+
\Pb[R_n>\varepsilon].
\end{align*}

\noindent Since \(S_n^{(3)}\) has the Dickman distribution, with density \(e^{-\gamma}\rho\), and \(\rho\) is bounded on compact intervals, there exists \(C>0\) such that
\begin{align*}
\Pb\big[
|S_n^{(3)}-1|\le \varepsilon
\big]
\le
C\varepsilon.
\end{align*}

\noindent We now bound the second term. Take
\begin{align*}
\varepsilon_n:=K_1\frac{\log\log n}{\log n},
\end{align*}
with \(K_1>0\) large enough. We decompose \(R_n\) according to its three terms. First, since the jumps of
\begin{align*}
\int_{(0,x_n]}x\,\mathcal N_n^{(0)}(dx)
\end{align*}
are bounded by \(x_n\), an exponential Chebyshev bound gives
\begin{align*}
\Pb\left[
\int_{(0,x_n]}x\,\mathcal N_n^{(0)}(dx)>\frac{\varepsilon_n}{3}
\right]
\le
C(\log n)^{-2},
\end{align*}
provided \(K_1\) is large enough.\\

\noindent Second,
\begin{align*}
\int_{(x_n,1]}|\Phi_n(x)-x|\,\mathcal N_n^{(1)}(dx)
\le
\delta_n\,\mathcal N_n^{(1)}((x_n,1]).
\end{align*}
The random variable \(\mathcal N_n^{(1)}((x_n,1])\) is Poisson with mean
\begin{align*}
\nu_n((x_n,1])
=
\log\left(\frac1{x_n}\right)
\le
C\log\log n.
\end{align*}
Therefore, by Chernoff's bound,
\begin{align*}
\Pb\left[
\int_{(x_n,1]}|\Phi_n(x)-x|\,\mathcal N_n^{(1)}(dx)>\frac{\varepsilon_n}{3}
\right]
\le
C(\log n)^{-2},
\end{align*}
again by choosing \(K_1\) large enough.\\

\noindent Finally, since \(Z_n\) has fixed mean \(a\) and \(y_n=O(1/\log n)\), another Chernoff bound gives
\begin{align*}
\Pb\left[
y_nZ_n>\frac{\varepsilon_n}{3}
\right]
\le
C(\log n)^{-2}.
\end{align*}
Combining the preceding estimates,
\begin{align*}
\Pb[R_n>\varepsilon_n]
\le
C(\log n)^{-2}.
\end{align*}
Consequently,
\begin{align*}
\mathcal E_{A^o}[\mathcal X_n^3,\mathcal X_n^2]
&\le
C\varepsilon_n
+
C(\log n)^{-2}
\\
&\le
C\frac{\log\log n}{\log n}.
\end{align*}
Since \(\mathrm e-\frac32>1\), this also gives
\begin{align*}
\mathcal E_{A^o}[\mathcal X_n^3,\mathcal X_n^2]
\le
C\frac{(\log\log n)^{\mathrm e-\frac32}}{\log n}.
\end{align*}

\subsection{Conditioning step}\label{sec:conditioningstep}

\noindent We now transfer the estimates obtained for the unconditioned approximation chain to the corresponding conditioned laws. We first record a simple conditioning estimate.

\begin{lemma}\label{lem:conditioning-transfer}

Let \(X,Y,Z\) be \(\mathbf N(\mathbbm X)\)-valued random variables defined on a common probability space, and let \(B\subset \mathbf N(\mathbbm X)\) be measurable. Suppose that

\begin{align*}
p
:=
\Pb[Z\in B]\wedge \Pb[Y\in B]
>
0.
\end{align*}

Assume moreover that \(X\in B\) on the event \(\{Z\in B\}\), and that \(B\) has finite diameter with respect to \(\mathfrak d\), namely

\begin{align*}
D_B
:=
\sup_{\eta,\zeta\in B}\mathfrak d(\eta,\zeta)
<
\infty.
\end{align*}
Then
\begin{align*}
d_W\left(
\mathcal L(X\mid Z\in B),
\mathcal L(Y\mid Y\in B)
\right)
\le
\frac{\E[\mathfrak d(X,Y)]}{p}
+
\frac{2D_B}{p}\,
\Pb\big[\{Z\in B\}\Delta\{Y\in B\}\big].
\end{align*}
\end{lemma}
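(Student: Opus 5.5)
The plan is to build an explicit coupling of the two conditional laws out of the given joint law of $(X,Y,Z)$ and bound its expected cost. Write $E_Z:=\{Z\in B\}$ and $E_Y:=\{Y\in B\}$, with $\Pb[E_Z]\ge p$ and $\Pb[E_Y]\ge p$. The natural candidate is to use the pair $(X,Y)$ on the common event $E_Z\cap E_Y$, and to patch the leftover mass on each side. The cost of that patch will be controlled by the diameter $D_B$: $X\in B$ on $E_Z$ by hypothesis, and $Y\in B$ on $E_Y$, so any two leftover points are at $\mathfrak d$-distance at most $D_B$.

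\textbf{Step 1: sub-probability pieces.} Set $a:=\Pb[E_Z]$, $b:=\Pb[E_Y]$ and $c:=\Pb[E_Z\cap E_Y]$. Define the measure $\gamma$ on $\mathbf N(\mathbbm X)^2$ as the law of $(X,Y)$ restricted to $E_Z\cap E_Y$. Then
\begin{align*}
\mathcal L(X\mid E_Z)&=\frac{\mathcal L(X;E_Z\cap E_Y)+\mathcal L(X;E_Z\setminus E_Y)}{a},\\
\mathcal L(Y\mid E_Y)&=\frac{\mathcal L(Y;E_Z\cap E_Y)+\mathcal L(Y;E_Y\setminus E_Z)}{b}.
\end{align*}
These two laws carry different normalizations, $1/a$ and $1/b$. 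To avoid that, I will use the triangle inequality for $d_W$ through the intermediate law $\mathcal L(X\mid E_Z\cap E_Y)$, or alternatively through the pair law $\gamma/c$. This requires $c>0$; if $c=0$, the symmetric difference has probability at least $p$, and the bound is trivial from $d_W\le D_B$.

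\textbf{Step 2: two comparisons.} First I compare $\mathcal L(X\mid E_Z)$ with $\mathcal L(X\mid E_Z\cap E_Y)$. Both laws are supported in $B$. The total variation distance between a conditional law on $E$ and the conditional law on a subevent $E'\subseteq E$ equals $1-\Pb[E']/\Pb[E]=\Pb[E\setminus E']/\Pb[E]$. Since the supports lie in a set of diameter $D_B$, we have $d_W\le D_B\cdot d_{TV}$, which gives at most $D_B\,\Pb[E_Z\setminus E_Y]/a\le D_B\,\Pb[E_Z\Delta E_Y]/p$. The same argument gives $d_W(\mathcal L(Y\mid E_Y),\mathcal L(Y\mid E_Z\cap E_Y))\le D_B\,\Pb[E_Y\setminus E_Z]/p$. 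The middle comparison uses the coupling $\gamma/c$:
\[
d_W\bigl(\mathcal L(X\mid E_Z\cap E_Y),\mathcal L(Y\mid E_Z\cap E_Y)\bigr)\le \frac{\E[\mathfrak d(X,Y)]}{c}.
\]

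\textbf{Step 3: main obstacle and fix.} The obstacle is that the middle term carries $1/c$ rather than $1/p$. To fix this, I will replace the triangle-inequality approach with a direct coupling having unnormalized weights. Couple the mass $\gamma$ (of total mass $c$) through $(X,Y)$. Then couple the remaining masses arbitrarily, for instance by a product coupling after rescaling; these remaining masses have total $1-c/a$ on the left and $1-c/b$ on the right. The mismatch between $c/a$ and $c/b$ is itself bounded by $\Pb[E_Z\Delta E_Y]/p$.

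Concretely, I will take the mixture $\pi=\frac{\gamma}{\max(a,b)}+\pi'$, where $\pi'$ is any coupling of the leftover nonnegative marginals, which have equal mass $1-c/\max(a,b)$. The first part of $\pi$ costs at most $\E[\mathfrak d(X,Y)]/p$. The leftover part is supported in $B\times B$, so it costs at most $D_B$ times its mass. That mass is
\[
1-\frac{c}{\max(a,b)}=\frac{\max(a,b)-c}{\max(a,b)}\le\frac{\Pb[E_Z\Delta E_Y]}{p}.
\]
It remains to check that both leftover marginals are nonnegative. On the $X$ side the leftover is
\[
\frac{\mathcal L(X;E_Z)}{a}-\frac{\mathcal L(X;E_Z\cap E_Y)}{\max(a,b)},
\]
which is nonnegative because $1/a\ge1/\max(a,b)$; the same holds on the $Y$ side. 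Summing the two contributions gives the stated bound, and in fact with constant $1$ instead of $2$.
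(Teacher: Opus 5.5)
Your proof is correct and follows the same strategy as the paper: transport the common event $\{Z\in B\}\cap\{Y\in B\}$ through the given pair $(X,Y)$, and pay at most $D_B$ per unit of leftover mass, since that mass is supported in $B\times B$. Your explicit weight $1/\max(\Pb[Z\in B],\Pb[Y\in B])$ on the matched part is what makes the paper's informal ``unmatched mass'' accounting rigorous. Because the leftover cost is $D_B$ times the leftover mass of a single marginal, rather than the paper's sum over both sides plus the normalization mismatch, you get constant $1$ in place of $2$.
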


\begin{proof}
Let
\begin{align*}
B_Z:=\{Z\in B\},
\qquad
B_Y:=\{Y\in B\},
\end{align*}
and write
\begin{align*}
p_Z:=\Pb[B_Z],
\qquad
p_Y:=\Pb[B_Y],
\qquad
p:=p_Z\wedge p_Y.
\end{align*}
We first use the original coupling on the event \(B_Z\cap B_Y\). On this event, the transportation cost is \(\mathfrak d(X,Y)\). After conditioning, this contribution is bounded by
\begin{align*}
\frac{1}{p}\,
\E\big[
\mathfrak d(X,Y)\mathbbm 1_{B_Z\cap B_Y}
\big].
\end{align*}

\noindent It remains to account for the part of the two conditional laws that is not paired through the event \(B_Z\cap B_Y\). Besides the masses coming from
\(B_Z\setminus B_Y\) and \(B_Y\setminus B_Z\), there may also be a mismatch of normalization on the common event \(B_Z\cap B_Y\), since the factors \(p_Z^{-1}\) and \(p_Y^{-1}\) need not agree. The total unmatched mass is bounded by
\begin{align*}
&\frac{\Pb[B_Z\setminus B_Y]}{p_Z}
+
\frac{\Pb[B_Y\setminus B_Z]}{p_Y}
+
\Pb[B_Z\cap B_Y]\left|\frac1{p_Z}-\frac1{p_Y}\right|
\\
&\le
\frac{2\Pb[B_Z\Delta B_Y]}{p}.
\end{align*}

\noindent On this remaining part, both variables belong to \(B\): indeed, \(X\in B\) on \(B_Z\) by assumption, and \(Y\in B\) on \(B_Y\) by definition. Therefore the transportation cost on the unmatched part is at most \(D_B\). We obtain
\begin{align*}
d_W\left(
\mathcal L(X\mid B_Z),
\mathcal L(Y\mid B_Y)
\right)
&\le
\frac{1}{p}\,
\E\big[
\mathfrak d(X,Y)\mathbbm 1_{B_Z\cap B_Y}
\big]
+
\frac{2D_B}{p}\Pb[B_Z\Delta B_Y]
\\
&\le
\frac{\E[\mathfrak d(X,Y)]}{p}
+
\frac{2D_B}{p}\Pb[B_Z\Delta B_Y].
\end{align*}
The proof is complete.
\end{proof}

\noindent We apply Lemma~\ref{lem:conditioning-transfer} with
\begin{align*}
X=\mathcal X_n^0,
\qquad
Z=\mathcal X_n^{\mathrm{geo}},
\qquad
Y=\mathcal X_n^3,
\qquad
B=A^o.
\end{align*}
Since
\begin{align*}
\int_{\mathbbm X}x\,\mathcal X_n^0(dx)
\le
\int_{\mathbbm X}x\,\mathcal X_n^{\mathrm{geo}}(dx),
\end{align*}
we have \(\mathcal X_n^0\) belongs to $A^o$ on the event \(\{\mathcal X_n^{\mathrm{geo}}\in A^o\}\). Then, by \eqref{eq:trueconditionedlaw},
\begin{align*}
\mathcal L\left(X\mid Z\in A^o\right)
=
\mathcal{L}\!\left(T^n_{\#}\big[\iota\circ \mathfrak{P}(H_n)\big]\right).
\end{align*}
Moreover, by Proposition~\ref{p:div_H},
\begin{align*}
\Pb[Z\in A^o]
=
\Pb[A_n]
\ge
\frac12.
\end{align*}
On the other hand, by the Dickman density of the total mass of the limiting Poisson point process,
\begin{align*}
\Pb[Y\in A^o]
=
\Pb\left[
\int_{\mathbbm X}x\,Y(dx)\le1
\right]
=
e^{-\gamma_{\mathrm E}},
\end{align*}
because \(\rho(t)=1\) for \(0\le t\le1\). Therefore
\begin{align*}
\Pb[Z\in A^o]\wedge \Pb[Y\in A^o]
\ge
c_0
\end{align*}
for some universal constant \(c_0>0\). Also, if \(\eta,\zeta\in A^o\), then
\begin{align*}
\mathfrak d(\eta,\zeta)
\le
\int_{\mathbbm X}x\,\eta(dx)
+
\int_{\mathbbm X}x\,\zeta(dx)
\le
2,
\end{align*}
so \(D_{A^o}\le2\).\\

\noindent Using Lemma~\ref{lem:conditioning-transfer}, together with \eqref{eq:total-event-discrepancy} and \eqref{eq:total-metric-discrepancy}, we obtain
\begin{align*}
d_W\left(
\mathcal{L}\!\left(T^n_{\#}\big[\iota\circ \mathfrak{P}(H_n)\big]\right),
\mathcal L(\mathcal X_n^3\mid \mathcal X_n^3\in A^o)
\right)
\le
C\frac{(\log\log n)^{\mathrm e-\frac32}}{\log n}.
\end{align*}
Finally, by Lemma~\ref{lem:attenuated-PD-poisson-cloud},
\begin{align*}
\mathcal L\big(\mathfrak q(\mathcal X_n^3)\mid \mathcal X_n^3\in A^o\big)
=
\mathfrak A[\theta_{\mathcal S}].
\end{align*}
Passing from point measures to their decreasing atom sequences, and using that this map is an isometry between \(\mathfrak d\) and \(d_{\mathcal S}\) on the configurations under consideration, we conclude that
\begin{align*}
d_{W,\mathcal S}
\big(
\vartheta_n,
\mathfrak A[\theta_{\mathcal S}]
\big)
\le
C\frac{(\log\log n)^{\mathrm e-\frac32}}{\log n}.
\end{align*}
The proof is now complete.

\appendix

\section{Proof of Lemma  \ref{Propcouplmonotone}}\label{sec}

\noindent Define the tail functions
\[
\Lambda_i(t):=\mu_i[(t,1]],
\]
for $i=1,2$ and their generalized inverses
\[
Q_i(s):=\sup\{t\in(0,1]\ ;\ \Lambda_i(t)\ge s\},
\]
for $s>0$, with the convention that the supremum of the empty set is zero. Let \((\Gamma_k\ ;\ k\ge1)\) be the points of a Poisson process with rate \(1\) on \((0,\infty)\), listed in increasing order, and define
\[
X_k^i:=Q_i(\Gamma_k),
\]
for $k\ge1$ and $ i=1,2.$ By the standard inverse-tail construction of Poisson point processes,
\[
\sum_{k\ge1}\mathbbm 1_{\{X_k^i>0\}}\delta_{X_k^i}
\]
has law \(\rho_{\mu_i}\), for \(i=1,2\). Since \(\mu_1\preceq\mu_2\), we have
\[
\Lambda_1(t)\le \Lambda_2(t),
\]
for all $0<t\le1.$ Therefore
\[
\{t\in(0,1]\ ;\ \Lambda_1(t)\ge s\}
\subseteq
\{t\in(0,1]\ ;\ \Lambda_2(t)\ge s\},
\]
for every \(s>0\). Hence \(Q_1(s)\le Q_2(s)\) for every \(s>0\), and consequently
\[
X_k^1\le X_k^2,
\]
for $k\ge1.$ The result follow from here.\\

\noindent \textbf{Acknowledgements}\\
Arturo Jaramillo Gil was supported by
the grant CBF2023-2024-2088.

\bibliographystyle{plain}
\bibliography{Bib}

\end{document}